\newcommand{\iinterv}[1]{\left\llbracket#1\right\rrbracket}
\newcommand{\aplt}{\ {\raise-.5ex\hbox{$\buildrel<\over\sim$}}\ }
\def\vec#1{\mathchoice{\mbox{\boldmath$\displaystyle#1$}}
{\mbox{\boldmath$\textstyle#1$}}
{\mbox{\boldmath$\scriptstyle#1$}}
{\mbox{\boldmath$\scriptscriptstyle#1$}}}
\def \sur#1#2{\mathrel{\mathop{\kern 0pt#1}\limits^{#2}}}
\def\e{\mathrm{e}}
\newcounter{algorithmeligne}
\newdimen\framewidth
\def\myframe#1{%
  \framewidth=\textwidth
  \advance\framewidth by -2pt   
  \advance\framewidth by -\rightmargin
  \advance\framewidth by -\leftmargin
  \fbox{\begin{minipage}{\framewidth}#1\end{minipage}}%
}
\def\algorithme#1
        \newcommand{\ITEM}{
                \stepcounter{algorithmeligne}
                {\footnotesize \arabic{algorithmeligne}.}
                }
        \newcommand{\lign}{\ITEM}
  \def\mQ{{\mathbb Q}}
\def\mR{{\mathbb R}}  
\def\mZ{{\mathbb Z}}
\newcommand{\ps}[2]{\langle#1,#2\rangle}
\thanks{CNRS and Universit\'e de Lyon / \'{E}NS Lyon / LIP, 46 all\'ee d'Italie, 69364 Lyon Cedex 07, France.
}}
\newtheorem{theorem}{Theorem}
\newtheorem{lemma}{Lemma}
\newtheorem{corollary}{Corollary}
\def\qed{\hfill $\Box$}
\begin{document}
\makeRR

\section{Introduction}
A {\em lattice}~$L$ is a discrete subgroup of a euclidean
space~$\mR^n$. Such an object can always be written as the set of
integer linear relations of some linearly independent
vectors~$\vec{b}_1,\ldots,\vec{b}_d \in \mR^n$. The~$\vec{b}_i$'s form
a {\em basis} of~$L$. Such a representation is not unique, but all
bases share the same cardinality~$d$, called the lattice {\em
  dimension}. Another lattice invariant is the so-called lattice {\em
  volume}~$\det(L)$, which is defined as the geometric $d$-dimensional
volume of any parallelepiped~$\mathcal{P}(\vec{b}_i) = \left\{ \sum_i
y_i \vec{b}_i, y_i \in [0,1] \right\}$ spanned by a lattice
basis~$(\vec{b}_i)_i$.  When~$d \geq 2$, a given lattice has an
infinity of bases, related to one another by unimodular
transformations.  Some bases are better than others, in particular
under the light of applications such as algorithmic number
theory~\cite{Cohen95} and cryptography~\cite{NgSt01,MiGo02}. In these
applications, one is mostly interested in lattice bases made of rather
short and rather orthogonal vectors. Such bases are called {\em reduced}.
One often distinguishes between reductions that are rather weak but
can be computing efficiently and reductions that are strong but that
require a much larger amount of computational resources. The main
reduction of the first family is the celebrated
LLL-reduction~\cite{LeLeLo82}, whereas the most famous one in the
second family is the Hermite-Korkine-Zolotarev reduction (HKZ for
short). There exist compromises between LLL and HKZ reductions, such
as Schnorr's Block-Korkine-Zolotarev (BKZ) reductions~\cite{Schnorr87}
depending on a parameter~$k$: the $2$-BKZ reduction is essentially the
LLL~reduction whereas the $d$-BKZ reduction is exactly the HKZ
reduction. Other compromises have been considered
in~\cite{Schnorr87,Schnorr07,GaHoKoNg06}.

{From} the algorithmic point of view, LLL-reduction can be reached in
time polynomial in the lattice dimension. The other parameters, such
as the dimension of the embedding space and the bit-size of the
initial vectors are of small interest here since all the described
algorithms have polynomial complexities with respect to them. On the
other extreme, there are two main algorithms to compute an HKZ-reduced
basis. The first one is due to Kannan~\cite{Kannan83} and was improved
by Helfrich and Schnorr~\cite{Helfrich85,Schnorr87}. Its complexity
has been revised downwards by Hanrot and Stehl\'e~\cite{HaSt07} who
proved a~$d^{\frac{d}{2\e}(1+o(1))}$ upper bound.  The other algorithm
is due to Ajtai, Kumar and Sivakumar~\cite{AjKuSi01} and its
complexity upper bound was re-assessed recently by Nguyen and
Vidick~\cite{NgVi07}: its cost is provably bounded by~$2^{5.9 \cdot
  d}$. The latter algorithm has a much better asymptotic complexity
upper bound than Kannan's. However, it suffers from two drawbacks:
firstly, it requires an exponential space whereas Kannan's space
requirement is polynomial; secondly, it is probabilistic in the sense
that there is a tiny probability that the computed basis is not
HKZ-reduced, whereas Kannan's algorithm is deterministic. In practice,
for manageable problem sizes, it seems that adaptations of Kannan's
algorithm still outperform the algorithm of Ajtai, Kumar and
Sivakumar. One of the results of the present paper is to provide a
worst-case complexity lower bound to Kannan's algorithm which is
essentially the same as the~$d^{\frac{d}{2\e}(1+o(1))}$ complexity
upper bound: it proves that from the worst-case point of view,
Kannan's algorithm is asymptotically worse that the one of Ajtai,
Kumar and Sivakumar. In the compromises between LLL and
HKZ-reductions, an algorithm computing HKZ-reduced bases (either
Kannan's or the one of Ajtai, Kumar and Sivakumar) is used
on~$k$-dimensional bases, where~$k$ is the parameter of the
compromise. When~$k$ is greater than~$c \log d$ for some constant~$c$,
the complexities of the compromise algorithms are~$k^{O(k)}$
or~$2^{O(k)}$ depending on the chosen HKZ-reduction algorithm.
\medskip

The main result of the present paper is to prove the existence of
HKZ-reduced bases which are arguably least reduced possible. These
bases are good corner cases for strong lattice reductions. We prove
that given them as input, Kannan's algorithm costs at
least~$d^{\frac{d}{2\e}(1+o(1))}$ binary operations in dimension~$d$,
thus completing the worst-case analysis of Kannan's algorithm. This
proves that the Ajtai-Kumar-Sivakumar algorithm is strictly better
than Kannan's from the worst-case asymptotic time complexity
perspective. These lattice bases also provide lower bounds on
Schnorr's constants~$\alpha_k$ and~$\beta_k$ which play a central role
to estimate the quality of Schnorr's hierarchies of reductions. As a
by-product, we improve the best known upper bound for~$\alpha_k$, and
the lower and upper bounds essentially match. Our lower bound
on~$\beta_k$ match its best known upper bound, provided
by~\cite{GaHoKoNg06}. This gives weight to the fact that the
primal-dual reduction therein may be better than Schnorr's classical
hierarchy. Finally, we provide lattice bases that are particularly bad
for Schnorr's hierarchy of reduction algorithms.

To achieve these results, we simplify and build upon a technique
introduced by Ajtai in~\cite{Ajtai03} to show lower bounds on
Schnorr's constants~$\alpha_k$ and~$\beta_k$. These lower bounds were
of the same orders of magnitude as the best upper bounds, but with
undetermined constants in the exponents. It consists in building
random lattice bases that are HKZ-reduced with non-zero probability
and such that the quantities under investigation (e.g., Schnorr's
constants) are close to the best known upper bounds. The random
lattice bases are built from their Gram-Schmidt orthogonalisations.

\medskip

\noindent
{\sc Road-map.} In Section~\ref{se:reminder} we provide the background
that is necessary to the understanding to the rest of the article. In
Section~\ref{se:ajtai} we simplify Ajtai's method to generate lattice
bases. We use it first in Section~\ref{se:worstHKZ} to show the
existence of worst-case HKZ-reduced bases with respect to the
orthogonality of the basis vectors.  Using these bases, we provide
lower bounds to the worst-case cost of Kannan's algorithm and to
Schnorr's constants~$\alpha_k$ and~$\beta_k$, in
Section~\ref{se:kannan}. We use Ajtai's technique a second time in
Section~\ref{se:BKZ} to build lattice bases that are particularly bad
for Schnorr's hierarchy of reduction algorithms. Finally, in
Section~\ref{se:open}, we draw a list of possible natural extensions
of our work.

\medskip
\noindent
{\sc Notation.} If~$y$ is a real number, we let~$\lfloor y \rceil$
denote its closest integer (with any rule for the ambiguous cases),
and we define~$\{ y \} = y - \lfloor y \rceil$.  If~$a \leq b$, we
let~$\iinterv{a,b}$ denote the set of integers belonging to the
interval~$[a,b]$. All logarithms used are in basis~$e$. Finally,
for~$x$ a real number, we define~$(x)_+ := \max(x, 0)$.

\section{Background on Lattices}
\label{se:reminder}

We refer to~\cite{Cassels71} for a complete introduction to lattices.

\medskip
\noindent 
{\bf Gram-Schmidt orthogonalisation.}
Let~$\vec{b}_1,\ldots,\vec{b}_d$ be linearly independent vectors. We
define~$\vec{b}_i^* = \vec{b}_i - \sum_{j<i} \mu_{i,j} \vec{b}_j^*$
with~$\mu_{i,j} =
\frac{\ps{\vec{b}_i}{\vec{b}_j^*}}{\|\vec{b}_j^*\|^2}$. The~$\vec{b}_i^*$'s
are orthogonal and, for any~$i$, we have that the linear span of
the~$\vec{b}_j^*$'s for~$j \leq i$ is exactly the span of
the~$\vec{b}_j$'s for~$j\leq i$. If~$j \leq i$, we denote
by~$\vec{b}_i(j)$ the projection of~$\vec{b}_i$ orthogonally to the
vectors~$\vec{b}_1,\ldots,\vec{b}_{j-1}$. We have~$\vec{b}_i(j) =
\vec{b}_i^* + \sum_{k=j}^{i-1} \mu_{i,k} \vec{b}_k^*$.

\medskip
\noindent 
{\bf Minkowski's inequality.} For all integer $d \geq 1$, there exists
a constant~$\gamma_d$, called {\em Hermite's constant}, such that for
any~$d$-dimensional lattice~$L$ there exists a non-zero
vector~$\vec{b} \in L$ with~$\|\vec{b}\| \leq \gamma_d^{1/2} \cdot
(\det L)^{\frac{1}{d}}$. The latter relation is known as {\em
  Minkowski's inequality}. Hermite's constant satisfies~$\gamma_d \leq
d$.  Asymptotically, one has~$\frac{1.744 d}{2\pi \e}(1+o(1)) \geq
\gamma_d \geq \frac{d}{2\pi \e}(1+o(1))$ (see~\cite{KaLe78} for the upper
bound).  We define the {\em minimum} of a lattice~$L$ as the length of
a shortest non-zero vector, and we let it be denoted by~$\lambda(L)$.
Minkowski's inequality can be easily restated in terms of the
Gram-Schmidt orthogonalisation of any basis~$(\vec{b}_i)_i$ of~$L$
since~$\det (L) = \prod_i \|\vec{b}_i^*\|$:
\[
\lambda(L) \leq \sqrt{d} \cdot \left(\prod_{i=1}^d
\|b_i^*\|\right)^{\frac{1}{d}}.
\]

\medskip
\noindent
{\bf Hermite-Korkine-Zolotarev reduction.} A basis~$(\vec{b}_i)_i$ of
a lattice~$L$ is said to be {\em HKZ-reduced} if its first vector reaches
the minimum of~$L$ and if orthogonally to~$\vec{b}_1$ the
other~$\vec{b}_i$'s are themselves HKZ-reduced. This implies that for
any~$i$ we have~$\|\vec{b}_i^*\| \leq \sqrt{d-i+1} \cdot
\left(\prod_{j=i}^d \|\vec{b}_j^*\| \right)^{\frac{1}{d-i+1}}$. We
call these~$d-1$ inequalities the {\em primary Minkowski
  inequalities}. Many other Minkowski-type inequalities are satisfied
by an HKZ-reduced basis since the HKZ-reducedness
of~$(\vec{b}_1,\ldots,\vec{b}_d)$ implies the HKZ-reducedness of any
basis~$(\vec{b}_i(i),\ldots,\vec{b}_j(i))$ for any~$i \leq j$.

\medskip
\noindent
{\bf Schnorr's hierarchies of reductions.}  A
basis~$(\vec{b}_1,\ldots,\vec{b}_d)$ is called {\em
  Block-Korkine-Zolotarev reduced} with block-size~$k$ ($k$-BKZ
for short) if for any~$i \leq d-k+1$ the~$k$-dimensional
basis~$(\vec{b}_i(i),\ldots,\vec{b}_{i+k-1}(i))$ is HKZ-reduced. This
reduction was initially called~$k$-reduction
in~\cite{Schnorr87}. Schnorr also introduced the~block-$2k$-reduction:
a basis~$(\vec{b}_1,\ldots,\vec{b}_d)$ is block-$2k$-reduced if for
any~$i \leq \lceil d/k \rceil -2$, the
basis~$(\vec{b}_{ik+1}(ik+1),\ldots,\vec{b}_{j}(ik+1))$ with~$j=
\min(d,(i+2)k)$ is HKZ-reduced. Any~$2k$-BKZ-reduced basis is
block-$2k$-reduced and any block-$2k$-reduced basis
is~$k$-BKZ-reduced. In the following, we will concentrate on the BKZ
hierarchy of reductions.

\medskip
\noindent
{\bf Schnorr's constants.} In order to analyze the quality of
the~$k$-BKZ and block-$2k$ reductions, Schnorr introduced the
constants
\[ 
\alpha_k =  
\max_{(\vec{b}_i)_{i \leq k} \mbox{\scriptsize HKZ-reduced}}
\ \frac{\|\vec{b}_1\|^2}{\|\vec{b}_k^*\|^2} \ \ \mbox{ and } \ 
\beta_k =  
\max_{(\vec{b}_i)_{i \leq 2k} \mbox{\scriptsize HKZ-reduced}}
\ \left( 
\frac{\prod_{i \leq k} \|\vec{b}_i^*\|^2}{\prod_{i>k} \|\vec{b}_i^*\|^2}
\right)^{\frac{1}{k}}.
\]
The best known upper bounds on~$\alpha_k$ and~$\beta_k$ are~$k^{1+\log
  k}$ and~$\frac{1}{10}k^{2\log 2}$ (see~\cite{Schnorr87,GaHoKoNg06}).
We will improve the upper bound on~$\alpha_k$ in
Section~\ref{se:kannan}.  Any~$k$-BKZ-reduced
basis~$(\vec{b}_1,\ldots,\vec{b}_d)$ of a lattice~$L$
satisfies~$\|\vec{b}_1\| \leq \min \left( k^{\frac{d-1}{k-1}},
\alpha_k^{\frac{d-1}{k-1}-1}\right) \lambda (L)$.
Ajtai~\cite{Ajtai03} showed that~$\alpha_k \geq k^{c \log k}$ for some
constant~$c$, so that the first upper bound is stronger than the
second one.  Furthermore, every block-$2k$-reduced
basis~$(\vec{b}_1,\ldots,\vec{b}_{mk})$ of a lattice~$L$
satisfies~$\|\vec{b}_1\| \leq \sqrt{k} \sqrt{\beta_k}^{m-1}
\lambda(L)$ (see~\cite{Schnorr87,Schnorr94}).

\section{Ajtai's Drawing of HKZ-Reduced Bases}
\label{se:ajtai}

Consider a dimension~$d>0$ and a function $f :
\iinterv{1,d} \rightarrow \mR^+ \setminus \{ 0\}$.  By generalising an
argument due to Ajtai~\cite{Ajtai03}, we prove that one can build a
$d$-dimensional lattice basis which is HKZ-reduced and such
that~$\|\vec{b}_i^*\| = f(i)$, under a ``Minkowski-type'' condition
for the values of~$f$.

\begin{theorem}
\label{th:ajtai}
Let~$d>0$ and $f: \iinterv{1, d} \rightarrow \mR^+ \setminus \{0\}$. 
Assume that for any~$j \leq d$, one has
\[
\sum_{i=1}^{j-1} \left(\frac{2\pi \e}{j-i}\right)^{\frac{j-i}{2}} 
\left(1 - \left(\frac{f(j)}{f(i)}\right)^2\right)_+^{\frac{j-i}{2}} 
\left( \prod_{k=i}^j \frac{f(i)}{f(k)} \right)
< 1.
\]
Then there exists an HKZ-reduced basis $(\vec{b}_1,\ldots,\vec{b}_d)$
with~$\|\vec{b}_i^*\| = f(i)$.
\end{theorem}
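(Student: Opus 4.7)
Following Ajtai, I would prove the theorem probabilistically: construct a random basis whose Gram--Schmidt norms are exactly the prescribed values, and show that, under the stated hypothesis, it is HKZ-reduced with positive probability. Concretely, fix an orthonormal frame, set $\vec{b}_j^*$ to be the $j$-th frame vector scaled by $f(j)$, and draw the Gram--Schmidt coefficients $\mu_{j,p}$ for $p < j$ independently and uniformly at random in $[-1/2,1/2]$; this determines $\vec{b}_j = \vec{b}_j^* + \sum_{p<j}\mu_{j,p}\vec{b}_p^*$ and hence the lattice. I would organise the proof inductively on $j$: assuming that $(\vec{b}_1,\ldots,\vec{b}_{j-1})$ is already HKZ-reduced, show that, with positive probability over the newly drawn coefficients $\mu_{j,1},\ldots,\mu_{j,j-1}$, no HKZ condition is spoiled when $\vec{b}_j$ is appended. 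Multiplying the per-step success probabilities, a jointly successful configuration exists.

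\textbf{Analysing the step from $j-1$ to $j$.} The only way HKZ-reducedness can break at level $i<j$ is through a new vector $\vec{v} = \sum_{k=i}^{j}n_k\vec{b}_k(i)$ with $n_j = m \neq 0$ and $\|\vec{v}\| < f(i)$. A short computation gives
\[
\|\vec{v}\|^2 \;=\; m^2 f(j)^2 \;+\; \sum_{p=i}^{j-1}\alpha_p^2 f(p)^2,
\qquad
\alpha_p \;=\; n_p + m\mu_{j,p} + \sum_{k=p+1}^{j-1}n_k\mu_{k,p}.
\]
So the bad event requires $|m| < f(i)/f(j)$ and $\sum_p \alpha_p^2 f(p)^2 < f(i)^2 - m^2 f(j)^2$. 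The key observation is that, since $m$ is a nonzero integer and $\mu_{j,p}$ is uniform on $[-1/2,1/2]$, $m\mu_{j,p} \bmod \mathbb{Z}$ is uniform on $\mathbb{R}/\mathbb{Z}$; consequently, after integrating over $(n_i,\ldots,n_{j-1}) \in \mathbb{Z}^{j-i}$ and over $(\mu_{j,p})_{p<j}$, a Siegel-style Fubini argument gives that the expected number of bad integer tuples equals the Euclidean volume of the ellipsoid $\{\vec{\alpha} : \sum_p \alpha_p^2 f(p)^2 < f(i)^2 - m^2 f(j)^2\}$, namely $V_{j-i}\,(f(i)^2 - m^2 f(j)^2)^{(j-i)/2}/\prod_{p=i}^{j-1}f(p)$, where $V_n$ is the volume of the unit ball in $\mathbb{R}^n$.

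\textbf{Matching the theorem's hypothesis.} Summing this Markov-type bound over $m \neq 0$ with $|m|f(j)<f(i)$, one factors out $f(i)^{j-i}(1-(f(j)/f(i))^2)^{(j-i)/2}$ from the $m=\pm 1$ contribution, then absorbs the remaining values of $m$ into a geometric tail that contributes an extra factor of order $f(i)/f(j)$; combined with $f(i)^{j-i}/\prod_{p<j}f(p)$ this yields $\prod_{k=i}^{j}f(i)/f(k)$. Using Stirling in the form $V_n \leq (2\pi e/n)^{n/2}$ (up to lower-order factors) converts $V_{j-i}$ into the factor $(2\pi e/(j-i))^{(j-i)/2}$ appearing in the hypothesis. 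Putting everything together and summing over $i \in \{1,\ldots,j-1\}$ exactly reproduces the left-hand side of the stated inequality, which is $<1$ by assumption; hence the bad event at step $j$ has probability $<1$, and the induction goes through.

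\textbf{Main obstacle.} The delicate point is the Siegel--Fubini exchange of sum over $(n_p)$ and expectation over the fresh $\mu_{j,p}$'s: one must check that, as $(n_i,\ldots,n_{j-1})$ runs over $\mathbb{Z}^{j-i}$ and $\mu_{j,\cdot}$ over $[-1/2,1/2]^{j-i}$, the shifts $\vec{\alpha}$ tile the ambient space with the correct multiplicity so that the expected count collapses to a single ellipsoid volume; this depends on the unit upper-triangular structure of the matrix relating $(n_p)$ to $(\alpha_p)$ and on the uniformity of $m\mu_{j,p}\bmod 1$ for nonzero integer $m$. The second delicate point is managing the sum over $m$ tightly enough that the constants match the hypothesis rather than missing by an unbounded factor.
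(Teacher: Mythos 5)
Your proposal is correct and follows essentially the same route as the paper: the same random construction with uniform $\mu_{j,p}$'s, the same induction on $j$ with a union bound over $i$ (using HKZ-reducedness of the first $j-1$ vectors to force $n_j\neq 0$), the same tiling/Fubini reduction of the expected number of short vectors to an ellipsoid volume, and the same count of at most $f(i)/f(j)$ admissible values of $m$ each dominated by the $m=\pm1$ term. The only (immaterial) difference is that you compute the exact ball volume and then apply Stirling's bound $V_n\leq(2\pi\e/n)^{n/2}$, whereas the paper bounds the indicator of the ellipsoid by $\exp(c-c\|\cdot\|^2/R^2)$ and optimises $c=n/2$, which yields the identical constant.
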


The condition above might seem intricate at first glance, though it is in
fact fairly natural. The term~$(j - i)^{-\frac{j-i}{2}} \prod_{k=i}^j
\frac{f(i)}{f(k)}$ resembles Minkowski's inequality. It is
natural that it should occur for all~$(i, j)$, since for an
HKZ-reduced basis Minkowski's inequality is satisfied for all
bases~$(\vec{b}_i(i), \ldots, \vec{b}_j(i))$. Another way of
stating this is that a necessary condition for a basis to be
HKZ-reduced would be
\[
\forall j \leq d, \ \sum_{i=1}^{j-1} \left(
4\gamma_{j-i+1} \right)^{-\frac{j-i}{2}}
\left( 1 - \left(\frac{f(j)}{f(i)}\right)^2\right)^{\frac{j-i}{2}}  
\left( \prod_{k=i}^j \frac{f(i)}{f(k)} \right) < 1. 
\]
This is merely a restatement of the fact that, since Minkowski's
inequality is verified for any pair~$(i, j)$, the $i$-th term is at
most $2^{-(j-i)}$, so that the sum is $<1$. In view of the fact that
asymptotically~$\gamma_d \leq \frac{1.744 d}{2 \pi \e}(1+o(1))$, we
see that we are not far from an optimal condition.

Lemma~\ref{le:core} is the core of the proof of
Theorem~\ref{th:ajtai}. It bounds the probability that when a random
basis~$(\vec{b}_1, \ldots, \vec{b}_d)$ is built appropriately, any
lattice vector~$\sum_i x_i \vec{b}_i$ with~$x_d \neq 0$ will be longer
than~$\vec{b}_1$.

\begin{lemma} 
\label{le:core}
Let~$(\vec{b}_1, \ldots, \vec{b}_{d-1})$ be a lattice basis and
let~$\vec{b}_d$ be a random vector. We suppose that:
\begin{enumerate}
\item For any~$i\leq d$, we have~$\|\vec{b}_i^*\| = f(i)$.
\item The~$\mu_{d,i}$'s for~$i<d$ are independent random variables uniformly
distributed in~$[-1/2,1/2]$.
\end{enumerate}
Let~$p$ be the probability that there exists~$(x_1,\ldots,x_d)$
with~$x_d \neq 0$ such that~$\left\|\sum_i x_i \vec{b}_i\right\| \leq
\|\vec{b}_1\|$. Then:
\[
p \leq \left(\frac{2\pi \e}{d-1}\right)^{\frac{d-1}{2}} \sum_{x > 0}
\left( 1 - \left(\frac{x f(d)}{f(1)}\right)^2 \right)_+^{\frac{d-1}{2}}
\left( \prod_{i<d} \frac{f(1)}{f(i)} \right).
\]
\end{lemma}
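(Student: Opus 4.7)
The plan is a union bound over the integer value of $x_d$, combined with a per-$x_d$ volume computation that exploits the fact that the box $B_0 := \{\sum_{j<d}\nu_j\vec{b}_j^* : \nu_j \in [-1/2,1/2]\}$ is a fundamental domain for the sublattice $L' := \sum_{i<d}\mZ\vec{b}_i$. By central symmetry of the lattice, the event in the statement is equivalent to the existence of $x_d > 0$ with $d(x_d\vec{b}_d, L') \leq f(1)$. Writing $\vec{b}_d = \vec{b}_d^* + \vec{w}$ with $\vec{w} := \sum_{j<d}\mu_{d,j}\vec{b}_j^* \in \mathrm{span}(L')$ random and $\vec{b}_d^*$ fixed, orthogonal to $L'$, of norm $f(d)$, Pythagoras reduces the per-$x_d$ event to $d(x_d\vec{w}, L') \leq R(x_d)$, where $R(x_d) := \sqrt{(f(1)^2 - x_d^2 f(d)^2)_+}$. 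This event is vacuous whenever $x_d f(d) > f(1)$, which is precisely what the $(\cdot)_+^{(d-1)/2}$ truncation in the stated bound encodes.

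Since $x_d\vec{w}$ is uniform on $x_d \cdot B_0$ (of volume $x_d^{d-1}\prod_{j<d}f(j)$), the per-$x_d$ probability equals $\mathrm{Vol}((x_d \cdot B_0)\cap(L' + B(0,R)))/(x_d^{d-1}\prod_{j<d}f(j))$. The key claim is then that the numerator is at most $x_d^{d-1} V_{d-1}(R)$, where $V_n(R)$ denotes the volume of the Euclidean ball of radius $R$ in dimension $n$, so that the per-$x_d$ bound simplifies to $V_{d-1}(R)/\prod_{j<d}f(j)$, independent of $x_d$. I would derive this from two structural facts that follow from the triangular Gram-Schmidt expansion $\vec{b}_i = \vec{b}_i^* + \sum_{k<i}\mu_{i,k}\vec{b}_k^*$. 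First, $B_0$ is a fundamental domain of $L'$: for any $\vec{v} = \sum c_i\vec{b}_i \in L'\setminus\{0\}$, taking $i_0$ to be the largest index with $c_{i_0}\neq 0$, the $\vec{b}_{i_0}^*$-coordinate of $\vec{v}$ is the nonzero integer $c_{i_0}$, so $B_0$ and $B_0+\vec{v}$ have disjoint interiors; together with $\mathrm{Vol}(B_0) = \det(L')$ this suffices. Second, for any positive integer $x_d$ and almost every $\vec{u}\in B_0$, exactly $x_d^{d-1}$ translates $\vec{u}+\vec{v}$ with $\vec{v}\in L'$ lie in $x_d\cdot B_0$: one solves for $c_{d-1}, c_{d-2}, \ldots, c_1$ one coordinate at a time so that each $\vec{b}_j^*$-component lies in $[-x_d/2, x_d/2]$, which leaves exactly $x_d$ admissible integer values of $c_j$ in the interior of $x_d \cdot B_0$.

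Together these give $\int_{x_d \cdot B_0} g = x_d^{d-1}\int_{B_0} g$ for every $L'$-periodic integrable $g$. Applied to $g(\vec{u}) := \#(L'\cap B(\vec{u},R))$, whose integral over the fundamental domain $B_0$ equals $V_{d-1}(R)$ by the standard tiling identity $\sum_{\vec{v}\in L'}\mathrm{Vol}(B_0\cap B(\vec{v},R)) = V_{d-1}(R)$, and combined with the pointwise inequality $\mathbf{1}_{L' + B(0,R)} \leq g$, this yields the desired $\mathrm{Vol}((x_d \cdot B_0)\cap(L' + B(0,R))) \leq x_d^{d-1} V_{d-1}(R)$. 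Summing the per-$x_d$ bounds over $x_d > 0$ and invoking $V_{d-1}(R) \leq (2\pi e R^2/(d-1))^{(d-1)/2}$ (from Stirling applied to $V_n(R) = \pi^{n/2} R^n/\Gamma(n/2+1)$), together with $R^{d-1} = f(1)^{d-1}(1 - (x_d f(d)/f(1))^2)_+^{(d-1)/2}$, produces exactly the stated inequality. The hard part will be establishing the two structural facts cleanly: without them one would have to separately bound the overcounting from $R$-neighborhoods of distinct lattice points of $L'$ that both meet $x_d\cdot B_0$, and it is precisely the triangular shape of the Gram-Schmidt expansion that obviates this.
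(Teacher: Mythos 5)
Your proof is correct, and while it shares the paper's overall strategy (a first-moment/union bound over the sublattice $L'=\sum_{i<d}\mZ\vec{b}_i$), the key estimate is carried out by a genuinely different, more geometric route. The paper passes to Gram--Schmidt coordinates, replaces $(x_1,\dots,x_{d-1})$ by the integers $u_i=x_i+\lfloor\sum_{j>i}\mu_{j,i}x_j\rceil$ and the uniform fractional parts $\delta_i$, takes a union bound over \emph{all} of $\mZ^{d-1}\times\mZ_{>0}$, and bounds each individual probability by the exponential-moment inequality $\Pr(S\leq R^2)\leq \E\exp\left(c-cS/R^2\right)$; summing over $\mZ^{d-1}$ turns the box integrals into Gaussian integrals over $\mR^{d-1}$, and the choice $c=(d-1)/2$ produces the factor $\left(2\pi\e/(d-1)\right)^{(d-1)/2}$. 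You instead take the union bound only over $x_d>0$, compute the per-$x_d$ probability exactly as a volume ratio, and bound $\mathrm{Vol}\left((x_d B_0)\cap(L'+B(0,R))\right)$ by $x_d^{d-1}V_{d-1}(R)$ via two tiling facts (the Gram--Schmidt box is a fundamental domain of $L'$, and $x_dB_0$ unfolds into exactly $x_d^{d-1}$ translates of it), recovering the same constant from Stirling's bound $\Gamma(n/2+1)\geq(n/(2\e))^{n/2}$. Your intermediate quantity $V_{d-1}(R)/\det L'$ is exactly the expected number of points of $L'$ in the ball, i.e.\ the same first moment the paper estimates analytically, which is why the two arguments land on the identical final expression; your version is marginally sharper at that stage and dispenses with the Laplace-transform trick, while the paper's version avoids any tiling lemma and needs only one-dimensional Gaussian integrals. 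The two minor prerequisites common to both --- the reduction to $x_d>0$ by central symmetry, and the fact that $\vec{w}=\sum_{j<d}\mu_{d,j}\vec{b}_j^*$ is uniform on the orthogonal box $B_0$ --- are handled correctly in your write-up.
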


\begin{proof}
Wlog we can assume $x_d > 0$. We can write
$$\sum_{i\leq d} x_i
\vec{b}_i = \sum_{i\leq d} \left( x_i + \sum_{j=i+1}^d \mu_{j,i} x_j
\right) \vec{b}_i^*.$$  
For~$i \leq d$, we define~$u_i = x_i +
\left\lfloor \sum_{j=i+1}^d \mu_{j,i} x_j\right \rceil$ and~$\delta_i
= \left\{ \sum_{j=i+1}^d \mu_{j,i} x_j \right\}$. Notice
that~$\delta_i = \left\{\mu_{d,i} x_d + \sum_{j=i+1}^{d-1} \mu_{j,i}
x_j \right\}$ is made of a random term ($\mu_{d,i} x_d$) and a
constant term ($\sum_{j=i+1}^{d-1} \mu_{j,i} x_j$). Since~$x_d \neq 0$
and since the~$\mu_{d,i}$'s are distributed independently and
uniformly in~$[-1/2,1/2]$, the same holds for the~$\delta_i$'s (for each fixed choice of~$(x_1,\ldots,x_d)$). The event defining~$p$ can
thus be rewritten as
\[
\exists u_d \in \mZ_{>0}, \  
\exists (u_1, \dots, u_{d-1}) \in \mZ^{d-1}, \
\sum_{i<d} (u_i + \delta_i)^2 f(i)^2 \leq f(1)^2 - u_d^2 f(d)^2.
\]

The probability of this event is~$0$ if~$f(1)^2 - u_d^2 f(d)^2 < 0$.
We shall thus assume in the sequel that~$0 < u_d \leq f(1)/f(d)$. The
probability~$p$ is then bounded by
\[
\sum_{u_d \in \mZ \setminus \{ 0 \}} \ \sum_{(u_1, \dots, u_{d-1}) \in \mZ^{d-1}} 
\Pr \left(\sum_{i<d} (u_i+ \delta_i)^2 f(i)^2 
\leq f(1)^2 - u_d^2 f(d)^2\right).
\]

Let~$c>0$ be an arbitrary constant. We can
estimate the last upper bound by using the inequality
\[
\Pr \left(\sum_{i<d} (u_i+ \delta_i)^2 f(i)^2 
\leq f(1)^2 - u_d^2 f(d)^2\right)
\leq \int_{\vec{\delta} \in \left[-\frac{1}{2}, \frac{1}{2}\right]^{d-1}} 
\exp \left(c - c\frac{\sum_{i<d} (u_i + \delta_i)^2 f(i)^2}
{f(1)^2 - u_d^2f(d)^2}\right) d\vec{\delta}.
\]

Summing over the~$u_i$'s, we obtain the estimate
\begin{eqnarray*}
\sum_{\vec{u} \in \mZ^{d-1}}
\int_{\vec{\delta} \in \left[-\frac{1}{2}, \frac{1}{2}\right]^{d-1}} 
\exp \left(c - c\frac{\sum_{i<d} (u_i + \delta_i)^2 f(i)^2}
{f(1)^2 - u_d^2f(d)^2}\right) d\vec{\delta}
&= &  \\
&&\hspace*{-2.5cm}
\int_{\mR^{d-1}} 
\exp\left(c - c\frac{\sum_{i<d} \delta_i^2 f(i)^2}{f(1)^2 - u_d^2 f(d)^2}\right)
d\vec{\delta} \\
& &  \hspace*{-2.5cm} = 
\e^c \prod_{i<d} \int_{\mR} 
\exp\left(- c\frac{\delta_i^2 f(i)^2}{f(1)^2 - u_d^2 f(d)^2}\right)
d\delta_i
\\
& & \hspace*{-2.5cm} = 
\e^c \left(\frac{\pi}{c}\right)^{\frac{d-1}{2}} 
\left(1 - \left(\frac{u_d f(d)}{f(1)}\right)^2\right)^{\frac{d-1}{2}} 
\prod_{i<d} \frac{f(1)}{f(i)}.
\end{eqnarray*}
Taking~$c = (d-1)/2$ and summing over $x_d = u_d > 0$ yields the bound
that we claimed. Recall that the terms corresponding to~$u_d >
f(1)/f(d)$ do not contribute. \qed
\end{proof}

We now proceed to prove Theorem~\ref{th:ajtai}.  We build the basis
iteratively, starting with~$\vec{b}_1$, chosen arbitrarily
with~$\|\vec{b}_1\| = f(1)$. Assume now that~$\vec{b}_1, \ldots,
\vec{b}_{j-1}$ have already been chosen with~$\|\vec{b}_i^*\|=f(i)$
for~$i<j$ and that they are HKZ-reduced. We choose~$\vec{b}_j$ as
$\vec{b}_j^* + \sum_{k<j} \mu_{j,k} \vec{b}_k^*$ such
that~$\|\vec{b}_j^*\| = f(j)$ and the random variables~$\left(
\mu_{j,k} \right)_{k<j}$ are chosen uniformly and independently
in~$[-1/2, 1/2]$. Let~$p_{i,j}$ be the probability that the
vector~$\vec{b}_i^*$ is not a shortest non-zero vector
of~$L(\vec{b}_i(i),\ldots,\vec{b}_j(i))$.  This means that there exist
integers~$(x_i,\ldots,x_j)$ such that
$$\left\|\sum_{k=i}^j x_k \vec{b}_k(i)\right\| < \|\vec{b}_i^*\|.$$ 
Since~$(\vec{b}_1, \ldots,
\vec{b}_{j-1})$ is HKZ-reduced, so
is~$(\vec{b}_i(i),\ldots,\vec{b}_j(i))$ and thus we must have~$x_j
\neq 0$. Lemma~\ref{le:core} gives us
\begin{eqnarray*}
p_{i,j} 
& \leq  &
\left(\frac{2\pi \e}{j-i}\right)^{\frac{j-i}{2}} \sum_{x > 0}
\left( 1 - \left(\frac{x f(j)}{f(i)}\right)^2 \right)_+^{\frac{j-i}{2}}
\left( \prod_{k=i}^{j-1} \frac{f(i)}{f(k)} \right) \\
& \leq &
\left(\frac{2\pi \e}{j-i}\right)^{\frac{j-i}{2}} 
\left(\frac{f(i)}{f(j)}\right) 
\left( 1 - \left(\frac{f(j)}{f(i)}\right)^2 \right)_+^{\frac{j-i}{2}}
\left( \prod_{k=i}^{j-1} \frac{f(i)}{f(k)}\right) \\
& \leq &
\left(\frac{2\pi \e}{j-i}\right)^{\frac{j-i}{2}} 
\left( 1 - \left(\frac{f(j)}{f(i)}\right)^2 \right)_+^{\frac{j-i}{2}}
\left( \prod_{k=i}^j \frac{f(i)}{f(k)} \right). 
\end{eqnarray*}
We conclude the proof by observing that the probability of
non-HKZ-reducedness of~$(\vec{b}_1,\ldots,\vec{b}_j)$ is at
most~$\sum_{i < j} p_{i,j}$. By hypothesis, this quantity
is~$<1$. Overall, this means that there exist~$\mu_{i,j}$'s such that
$(\vec{b}_1,\ldots,\vec{b}_j)$ is HKZ-reduced.  
\qed

The proof of the lemma and the derivation of the theorem may not seem
tight. For instance, summing over all possible~$(u_1, \ldots, u_d)$
might seem pessimistic in the proof of the lemma. We do not know how
to improve the argument apart from the~$x_d$ part, for which, when~$j-i$ 
is large, the term
\[
\sum_{x>0} \left( 1 - \left(x \frac{f(j)}{f(i)}\right)^2
\right)_+^{\frac{j-i}{2}}
\]
could be interpreted as a Riemann sum corresponding to the integral
\[
\frac{f(i)}{f(j)} \cdot 
\int_{0}^{\pi/2} \sin^{j-i+1} x\, \mathrm{d}x 
\approx \frac{f(i)}{f(j)} \cdot \sqrt{\frac{\pi}{2(j - i + 1)}}.
\] 

Notice however that if one uses the same technique to look for vectors
of lengths smaller than~$\sqrt{c \cdot d} \cdot
\left(\prod_{i<d}f(i)\right)^{\frac{1}{d}}$ instead of~$f(1)$, one
finds that there exists a lattice where there is no vector shorter
than this length (with $x_d \neq 0$) as soon as~$c < \frac{1}{2\pi
\e}$. We thus recover, up to the restriction~$x_d \neq 0$, the
asymptotic lower bound on Hermite's constant. As a consequence, it
seems that the main hope of improvement would be to replace the sum
(in the proof of the theorem) by a maximum, or something intermediate.
Replacing by a maximum seems quite difficult. It would require to
prove that, if vectors of lengths~$\leq \| \vec{b}_1\|$ exist, then
one of them has~$x_d \neq 0$, at least almost surely. A deeper
understanding of that kind of phenomenon would allow one to obtain
refined versions of Theorem~\ref{th:ajtai}.

\section{Worst-Case HKZ-reduced Bases}
\label{se:worstHKZ}

This section is devoted to the construction of an explicit
function~$f$ satisfying the conditions of Theorem~\ref{th:ajtai} as
tightly as possible. In order to make explicit the fact that~$f$
depends on the underlying dimension~$d$, we shall write~$f_d$ instead
of~$f$. Note that though~$f(i)$ will depend on~$d$, this will not be
the case for~$f(d - i)$. Suppose that the basis~$(\vec{b}_i)_i$ is
HKZ-reduced. Then~$f_d$ must satisfy Minkowski-type inequalities,
namely:
\[
\forall i<j, \ f_d(i) \leq \sqrt{\gamma_{j-i+1}} \cdot \left(
\prod_{k=i}^j f_d(k) \right)^{\frac{1}{j-i+1} }.
\]

We choose~$f_d$ according to the strongest of those conditions, namely
those we called the primary Minkowski inequalities, i.e., with~$j =
d$. It is known (see~\cite{PeZw07} for example) that this set of
conditions does not suffice for an HKZ-reduced basis to exist.  We
thus expect to have to relax somehow these constraints. We will also
replace the Hermite constant (known only for~$d \leq 8$ and~$d = 24$)
by a more explicit term.  For these reasons, we introduce
\[
f_{\psi, d}(i) = \sqrt{\psi(d-i+1)} \cdot \left( \prod_{k=i}^d f_{\psi, d}(k)
\right)^{\frac{1}{d-i+1}},
\] 
where~$\psi$ is be chosen in the sequel.  This equation
uniquely defines~$f_{\psi, d}(i)$ for all~$i$ once we set~$f_{\psi, d}(d) =
1$. 

\begin{theorem}
\label{th:main}
Let $\psi(x) = C \cdot x$ with~$C=\exp(-6)$. Then, for all~$1 \leq i <
j \leq d$, we have
\[
({j - i + 1})^{-\frac{j - i}{2}} 
\left(1 - \left(\frac{f_{\psi, d}(j)}{f_{\psi, d}(i)}\right)^2
\right)_+^{\frac{j-i}{2}} 
\left( \prod_{k=i}^j \frac{f_{\psi, d}(i)}{f_{\psi, d}(k)} \right) 
\leq  
\left(2 \pi \e (\sqrt{\e} + 1)^2 \right)^{-\frac{j-i}{2}}.
\]
\end{theorem}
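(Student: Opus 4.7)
Setting $\ell = j - i$, $n = d - i + 1$, $m = d - j + 1$ (so $n = m + \ell$), and $r = f_{\psi,d}(j)/f_{\psi,d}(i)$, one immediately reduces to the case $r < 1$: otherwise $(1 - r^2)_+^{\ell/2} = 0$ and the inequality is trivial.

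The first technical step is to extract a closed form for $\prod_{k=i}^j f_{\psi,d}(i)/f_{\psi,d}(k)$. The defining relation rearranges, for every $i \leq d - 1$, to $\prod_{k=i}^d f_{\psi,d}(k) = f_{\psi,d}(i)^n / (Cn)^{n/2}$. Applying it at $i$, and at $j$ when $j \leq d - 1$, yields two formulas: $\prod_{k=i}^d f_{\psi,d}(i)/f_{\psi,d}(k) = (Cn)^{n/2}$ when $j = d$ (i.e.\ $m = 1$); and $\prod_{k=i}^j f_{\psi,d}(i)/f_{\psi,d}(k) = r^{m-1}\, C^{\ell/2}\, n^{n/2}/m^{m/2}$ when $j \leq d - 1$ (i.e.\ $m \geq 2$). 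The case split is unavoidable because the recursion degenerates at $i = d$, where the initial condition $f_{\psi,d}(d) = 1$ replaces it.

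In the case $j = d$, the left-hand side is at most $\sqrt{n}\, C^{n/2}$; writing $K = 2\pi e(\sqrt{e} + 1)^2$, the inequality becomes $\sqrt{n} \leq e^3 (1/(KC))^{(n-1)/2}$, which follows from $KC < 1/3$ (a direct numerical check, since $KC = 2\pi(\sqrt{e}+1)^2/e^5 \approx 0.297$) together with an easy growth-rate comparison. In the case $j \leq d - 1$, the key estimate is $r^{m-1}(1 - r^2)^{\ell/2} \leq (m-1)^{(m-1)/2} \ell^{\ell/2}/(n-1)^{(n-1)/2}$, which is the maximum of the left side over $r \in [0,1]$, attained at $r^2 = (m-1)/(n-1)$. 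Combining this with $(\ell/(\ell+1))^{\ell/2} \leq 1$, with $(1 - 1/m)^{m-1} \leq e^{-1} \cdot m/(m-1)$ (giving $(m-1)^{(m-1)/2}/m^{m/2} \leq 1/\sqrt{e(m-1)}$), and with $(1 + 1/(n-1))^{n-1} \leq e$ (giving $n^{n/2}/(n-1)^{(n-1)/2} \leq \sqrt{en}$), one bounds the left-hand side by $C^{\ell/2}\sqrt{n/(m-1)}$. Since $m \geq 2$, one has $n/(m-1) = 1 + (\ell+1)/(m-1) \leq \ell + 2$, so it suffices to verify $(\ell+2)(KC)^\ell \leq 1$. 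At $\ell = 1$ this is exactly $KC \leq 1/3$; for $\ell \geq 2$, the difference $\ell \log(1/(KC)) - \log(\ell+2)$ has derivative $\log(1/(KC)) - 1/(\ell+2) > 0$, so the $\ell = 1$ bound propagates.

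The main obstacle is not the algebra but the fine-tuning of the constant $C = e^{-6}$: the sharpness at $\ell = 1$ reduces exactly to the numerical inequality $6\pi(\sqrt{e} + 1)^2 \leq e^5$ (with only about $11\%$ slack), and this is what dictates the choice of $C$. A secondary subtlety is the need to treat $j = d$ separately, because the product formula derived for $j \leq d-1$ relies on invoking the recursion at $j$, which fails for $j = d$.
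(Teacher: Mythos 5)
Your proof is correct, and it takes a genuinely different --- and substantially more elementary --- route than the paper's. The paper first reduces to showing that the left-hand side is at most $\exp(-\frac{5}{2}(j-i))$, then splits the analysis into four regimes in the variables $a=d-i+1$, $b=d-j+1$: it bounds the factor $T_1=(1-r^2)_+^{(j-i)/2}$ crudely by $1$ except when $b>a-1.65a/\log^3 a$, studies the product term via the functions $\alpha(a,b)+\beta(a,b)\log C$ and several monotonicity/convexity lemmas, and must finish with an exhaustive interval-arithmetic verification of all pairs $2\le b<a\le 158000$. Your argument avoids all of this by two moves the paper never makes: (i) the exact closed form $\prod_{k=i}^j f(i)/f(k)=r^{m-1}C^{\ell/2}n^{n/2}/m^{m/2}$ (which I checked against the recursion, e.g.\ for $d=3$, $i=1$, $j=2$ both give $3^{3/4}C^{1/4}2^{-1/2}$), and (ii) the joint maximization of $r^{m-1}(1-r^2)^{\ell/2}$ over $r\in[0,1]$, attained at $r^2=(m-1)/(n-1)$, which couples $T_1$ with the $r^{m-1}$ factor hidden inside $T_2$ instead of treating the two terms separately. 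The elementary Stirling-type bounds $(1-1/m)^{m-1}\le \frac{m}{e(m-1)}$ and $(1+1/(n-1))^{n-1}\le e$ then collapse everything to $\sqrt{\ell+2}\,(KC)^{\ell/2}\le 1$ with $K=2\pi\e(\sqrt{\e}+1)^2$, i.e.\ to the single numerical fact $6\pi(\sqrt{\e}+1)^2\le \e^5$, and the monotonicity in $\ell$ is immediate. You also correctly isolate the $j=d$ case, where the recursion degenerates and only the relation at $i$ is available; the paper does the same at the start of its appendix. What the paper's heavier machinery buys is a framework for later tightening the constant $C$ (its concluding remarks discuss exactly this, with binding constraint $4\pi(\sqrt{\e}+1)^2\le\e^5$ at $(i,j)=(d-1,d)$, marginally weaker than your $6\pi(\sqrt{\e}+1)^2\le\e^5$); what yours buys is a short, uniform, computer-free proof.
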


Thanks to Theorem~\ref{th:ajtai}, we obtain the following.

\begin{corollary}
\label{co:shape}
Let~$\psi$ be as in the previous theorem. There exist HKZ-reduced
bases with
\[
\|\vec{b}_i^*\| = f_{\psi, d}(i) = \sqrt{d - i + 1} \cdot \prod_{l=i+1}^d 
\left(C(d - l + 2)\right)^{\frac{1}{2(d - l + 1)}}.
\]
Moreover, when~$d-i$ grows to infinity, we have
\[
\|\vec{b}_i^*\| = (d - i + 1)^{\frac{1+\log C}{2}} 
\cdot \exp\left(\frac{\log^2 (d - i + 1)}{4}+O(1)\right).
\]
\end{corollary}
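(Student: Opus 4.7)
The corollary consists of three separate assertions that I would prove in sequence.

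\textbf{Existence.} By Theorem~\ref{th:ajtai} applied to $f = f_{\psi,d}$, existence reduces to verifying the sum condition. Theorem~\ref{th:main} bounds the nontrivial factor of each summand by $(j-i+1)^{(j-i)/2}(2\pi \e(\sqrt{\e}+1)^2)^{-(j-i)/2}$, so multiplying by the prefactor $(2\pi \e/(j-i))^{(j-i)/2}$ from Theorem~\ref{th:ajtai} causes the $(2\pi \e)^{(j-i)/2}$ to cancel, leaving
\[
\left(1+\frac{1}{j-i}\right)^{(j-i)/2}(\sqrt{\e}+1)^{-(j-i)} \le \sqrt{\e}\,(\sqrt{\e}+1)^{-(j-i)}
\]
by the standard inequality $(1+1/k)^{k/2} < \sqrt{\e}$. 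Summing over $k = j-i \ge 1$ is strictly dominated by the geometric sum $\sqrt{\e}\sum_{k\ge 1}(\sqrt{\e}+1)^{-k} = 1$.

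\textbf{Explicit form.} Let $g(i) := f_{\psi,d}(i)$. Raising the defining identity to the power $d-i+1$ and isolating the $k=i$ factor in the product gives
\[
g(i)^{d-i} = \psi(d-i+1)^{(d-i+1)/2}\prod_{k=i+1}^d g(k) \qquad (i \le d-1).
\]
Writing the analogue at index $i-1$ and dividing eliminates the tail product and produces a closed-form ratio $g(i-1)/g(i)$ depending only on $\psi$. Starting from $g(d)=1$ and iterating this ratio downward, then substituting $\psi(x)=Cx$ and reindexing with $l = d-i'+1$, collects the contributions into a product indexed by $l \in \{i+1,\ldots,d\}$ of the stated form.

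\textbf{Asymptotic.} Taking logarithms of the explicit formula and writing $N = d-i+1$, one has up to $O(1)$
\[
\log g(i) = \frac{1}{2}\log N + \sum_{m=1}^{N-1}\frac{\log(C(m+1))}{2m}.
\]
Splitting the numerator, the $\log C$ piece gives $\frac{\log C}{2}\log N + O(1)$ via $\sum_{m\le N}1/m = \log N + O(1)$; the $\log(m+1)$ piece, after the shift $m \mapsto m+1$ and absorbing the convergent discrepancy $\sum \log m/(m(m-1))$, equals $\sum_{m\le N}\log m/(2m) = \frac{1}{4}\log^2 N + O(1)$ (by comparison with $\int \log x/(2x)\,dx$). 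Combining yields $\log g(i) = \frac{1+\log C}{2}\log N + \frac{1}{4}\log^2 N + O(1)$, the announced asymptotic.

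The substantive step is the first: the bound barely closes, since $(1+1/k)^{k/2} \le \sqrt{\e}$ and $\sum_{k\ge 1}(\sqrt{\e}+1)^{-k} = 1/\sqrt{\e}$ multiply to exactly $1$, and the value $C = \e^{-6}$ in Theorem~\ref{th:main} is calibrated precisely to make the finite sum remain strictly below $1$. The other two steps are routine recurrence-solving and elementary asymptotic expansion.
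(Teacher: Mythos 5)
Your proof is correct and follows essentially the same route as the paper: the existence part combines Theorem~\ref{th:ajtai} with Theorem~\ref{th:main} via exactly the same cancellation of $(2\pi\e)^{(j-i)/2}$ and the geometric bound $\sqrt{\e}\sum_{k\ge 1}(\sqrt{\e}+1)^{-k}=1$, the explicit formula is the paper's Lemma~\ref{le:explf} obtained by the same quotient-of-recurrences argument, and the asymptotic is the same elementary sum-versus-integral estimate giving $\frac{1}{4}\log^2(d-i+1)$ and $\frac{\log C}{2}\log(d-i+1)$ up to $O(1)$. Your closing observation that the geometric bound closes with no slack (so that strictness comes only from the finiteness of the sum and the strict inequality $(1+1/k)^{k/2}<\sqrt{\e}$) is accurate and consistent with the paper's calibration of $C=\exp(-6)$.
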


The proof of the Theorem~\ref{th:main} follows from elementary
analytical considerations. The elementary and somewhat technical
nature of this proof leads us to postpone it to an appendix. It can be
skipped without inconvenience for the general progression of the
paper. We only give here an overview of the strategy. 

First, we prove that~$(j - i + 1)^{-\frac{j-i}{2}} \left(
\prod_{k=i}^j \frac{f(i)}{f(k)} \right) < 1$. Then, in order to prove
that the whole term is actually smaller than~$\left(2\pi \e (\sqrt{\e}
+ 1)^2\right)^{\frac{i-j}{2}}$, we need to consider four different
cases. Let us write~$a = d - i + 1$ and~$b = d - j + 1$.  This change
of variables makes the problem independent of~$d$.
 
\begin{itemize}
\item When~$a$ and~$b$ are very close, i.e., $a \geq b \geq a - 1.65
  \frac{a}{(\log a)^3}$, the term $(1 - (f(j)/f(i))^2)$ can be made
  arbitrarily small when~$a$ grows to infinity.  For~$a$ large enough,
  this yields a sufficiently small exponential term.
\item When~$a$ and~$b$ are not too close but not too far either, i.e.,
  $a - 1.65\frac{a}{(\log a)^3} \geq b \geq \kappa a$ for any
  constant~$\kappa$, the term~$(j - i + 1)^{-\frac{j-i}{2}} \left(
  \prod_{k=i}^j \frac{f(i)}{f(k)} \right)$ is decreasing
  exponentially, at a rate which can be made arbitrarily large for $a$
  large enough (thanks to the ``$x$'' part of $\psi(x)$).
\item When~$a / b \rightarrow +\infty$, the ``$C$'' part of~$\psi(x)$
  provides an exponential term.
\item Finally, for small~$a$ (the arguments used in the previous zones
  only work when~$a$ is large enough), we have to perform numerical
  computations to check that the inequality is indeed true.
\end{itemize}

\noindent
{\em Proof of the corollary.}  According to Theorem~\ref{th:main}, we
have
\begin{eqnarray*}
\sum_{i=1}^{j-1} \left(\frac{2\pi \e}{j-i}\right)^{\frac{j-i}{2}} 
\left( 1 - \left(\frac{f(j)}{f(i)}\right)^2\right)^{\frac{j-i}{2}} 
\left( \prod_{k=i}^j \frac{f(i)}{f(k)} \right) 
& \leq & 
\sum_{i=1}^{j-1} \left(\frac{j-i+1}{j-i}\right)^{\frac{j-i}{2}} 
\left(\sqrt{\e} + 1\right)^{-(j-i)}\\ 
& < & 
\sqrt{\e} \cdot \sum_{i\geq 1} (\sqrt{\e} + 1)^{-i} = 1. 
\end{eqnarray*}
The first part of the result follows from Theorem~\ref{th:ajtai}
and basic computations that are actually detailed in the appendix
(Lemma~\ref{le:explf}). 
For the second part, note that our choice of~$\psi$ gives
\[
2 \log f_{\psi, d}(i) = \log (d - i + 1) + \sum_{l = i+1}^d
\frac{\log C + \log(d - l + 2)}{d - l + 1}.
\]
Suppose that~$d-i \rightarrow + \infty$. We have
\begin{eqnarray*}
\left| \sum_{l = i+1}^d \frac{\log(d - l + 2)}{d - l + 1} - \int_{i}^{d} 
\frac{\log(d - x + 1)}{d - x + 1}\,\mathrm{d}x \right| & \leq & 
\left| \sum_{l = i+1}^d \frac{\log(d - l + 1)}{d - l + 1} 
- \int_{i}^{d}
\frac{\log(d - x + 1)}{d - x + 1} \, \mathrm{d}x
 \right|\\
&&\hspace*{3cm} + \sum_{l = i+1}^d \frac{1}{(d-l+1)^2}.\\
&& \hspace*{-3cm} \leq 
O(1) + \sum_{l=i+1}^d \int_{l-1}^{l} \left| \frac{\log(d - l + 1)}
{d - l + 1} 
- \frac{\log(d - x + 1)}{d - x + 1} \right| \,\mathrm{d}x \\
&& \hspace*{-3cm} \leq O(1) + 
\sum_{l=i+1}^d \max_{x\in[l-1,l]} \frac{|1 - \log(d - x + 1)|}{(d - x + 1)^2} 
\ = \ O(1).
\end{eqnarray*}

Classically, we also have
\[
\left| \sum_{l = i+1}^d \frac{\log C}{d - l + 1} - \log (C) \cdot 
\log(d - i + 1) \right| =  O(1).
\]
The result follows from the fact that~$\int_i^d \frac{\log(d - x +
  1)}{d - x + 1} \,\mathrm{d}x = \frac{\log^2 (d-i+1)}{2}$. \qed

As a direct consequence of the Corollary, we also have
\begin{corollary}
Let~$\psi$ be as in the previous theorem. There exist dual-HKZ-reduced
bases with
\[
\|\vec{b}_i^*\| = f_{\psi, d}(i) = (\sqrt{d - i + 1})^{-1} \cdot \prod_{l=i+1}^d 
\left(C(d - l + 2)\right)^{-\frac{1}{2(d - l + 1)}}.
\]
Moreover, when~$d-i$ grows to infinity, we have
\[
\|\vec{b}_i^*\| = (d - i + 1)^{-\frac{1+\log C}{2}} 
\cdot \exp\left(-\frac{\log^2 (d - i + 1)}{4}+O(1)\right).
\]
\end{corollary}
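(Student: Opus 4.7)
The plan is a direct application of lattice duality to Corollary~\ref{co:shape}. Given an HKZ-reduced basis $(\vec{b}_1, \ldots, \vec{b}_d)$ of a lattice $L$ with $\|\vec{b}_i^*\| = f_{\psi, d}(i)$ as furnished by Corollary~\ref{co:shape}, I would consider its reversed dual: let $(\vec{b}_j^\vee)_j$ denote the standard dual basis of $L^\vee$, characterized by $\ps{\vec{b}_i}{\vec{b}_j^\vee} = \delta_{ij}$, and set $\tilde{\vec{b}}_i \assign \vec{b}_{d - i + 1}^\vee$. A standard computation (factor $L$ as $QR$ with $Q$ orthonormal and $R$ upper triangular with diagonal entries $\|\vec{b}_i^*\|$, then read off the diagonal of the lower-triangular inverse $(R^T)^{-1}$ in reversed order) yields the reciprocal identity $\|\tilde{\vec{b}}_i^*\| = 1/\|\vec{b}_{d - i + 1}^*\|$, and $(\tilde{\vec{b}}_1, \ldots, \tilde{\vec{b}}_d)$ is dual-HKZ-reduced by definition.

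Next, I would substitute the explicit form of $f_{\psi, d}$ from Corollary~\ref{co:shape} into this reciprocal identity. Applying the change of variables $l \leftrightarrow d - l + 1$ in the product defining $f_{\psi, d}(d - i + 1)$ — already employed in the proof of Corollary~\ref{co:shape} — produces the stated closed-form formula for the Gram-Schmidt norms, up to the routine re-indexing needed to match the paper's convention. For the asymptotic part, I would simply take the logarithm of the reciprocal identity and plug in the asymptotic expansion of Corollary~\ref{co:shape}: reciprocation negates the logarithm, so both the polynomial exponent $(1 + \log C)/2$ and the coefficient $1/4$ multiplying $\log^2 (d - i + 1)$ in the exponential flip sign, while the error term remains $O(1)$.

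I do not anticipate any substantive obstacle. The argument is essentially a bookkeeping exercise built on the standard reciprocal identity for the Gram-Schmidt norms of a reversed dual basis, combined with the explicit computations already performed for Corollary~\ref{co:shape}. The only mildly delicate point is consistently tracking the indexing convention through the duality transformation, and this is no more demanding than the index substitutions already carried out in the proof of the primal corollary.
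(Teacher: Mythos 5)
Your overall strategy --- take the worst-case HKZ-reduced basis of Corollary~\ref{co:shape}, pass to its reversed dual, and invoke the reciprocal identity for Gram--Schmidt norms --- is certainly the intended one; the paper gives no proof at all, presenting the result as a ``direct consequence'' of Corollary~\ref{co:shape}, so your write-up is more explicit than the original. The reciprocal identity $\|\tilde{\vec{b}}_i^*\| = 1/\|\vec{b}_{d-i+1}^*\|$ and the fact that the reversed dual of an HKZ-reduced basis is dual-HKZ-reduced are both correct.

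The step you dismiss as ``routine re-indexing'' is, however, the only real issue, and it is not routine. Your construction yields $\|\tilde{\vec{b}}_i^*\| = 1/f_{\psi,d}(d-i+1) = (\sqrt{i})^{-1}\prod_{l=2}^{i}(Cl)^{-\frac{1}{2(l-1)}}$, whereas the corollary asserts $\|\vec{b}_i^*\| = 1/f_{\psi,d}(i)$. As sequences in $i$ these two profiles are reverses of one another, and reversing the order of a basis does not simply permute its Gram--Schmidt norms, so you cannot pass from one to the other by relabelling. Worse, the printed profile $1/f_{\psi,d}(i)$ cannot be realised by any dual-HKZ-reduced basis under the standard convention (reversed dual is HKZ-reduced): it would force $j \mapsto f_{\psi,d}(d-j+1)$ to be an HKZ-achievable profile, but that profile begins $1, 2C, \ldots$ and already violates Minkowski's inequality in dimension~$2$, since $1 > \sqrt{\gamma_2 \cdot 2C}$ for $C = \exp(-6)$. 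So you should either state and prove the corollary with the indices reversed --- formula $(\sqrt{i})^{-1}\prod_{l=2}^{i}(Cl)^{-\frac{1}{2(l-1)}}$ and asymptotics as $i \to \infty$, which is what your argument actually delivers and is presumably what was meant --- or explicitly adopt a non-standard indexing convention for dual-HKZ-reduced bases. The same caveat applies to the asymptotic claim: reciprocation does negate the logarithm as you say, but the resulting expansion is valid as $i \to \infty$, not as $d-i \to \infty$.
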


\section{Lower Bounds Related to the HKZ-Reduction}
\label{se:kannan}

The HKZ-reduced bases that we built in the previous section provide
lower bounds to several quantities. It gives a lower bound on the
complexity of Kannan's algorithm for computing a shortest non-zero
vector~\cite{Kannan83} that matches the best known upper
bound~\cite{HaSt07}. It also provides essentially optimal lower bounds
to Schnorr's constants~$\alpha_k$ and~$\beta_k$.

\subsection{Reminders on Kannan's Algorithm}

A detailed description of Kannan's algorithm can be found
in~\cite{Schnorr87}. Its aim is to HKZ-reduce a given
basis~$(\vec{b}_1,\ldots,\vec{b}_d)$. To do this, it first
quasi-HKZ-reduces it, which means that~$\|\vec{b}_1\| \leq 2
\|\vec{b}_2^*\|$ and the basis~$(\vec{b}_2(2),\ldots,\vec{b}_d(2))$ is
HKZ-reduced. After this first step, it finds all
solutions~$(x_1,\ldots,x_d)\in\mZ^d$ to the equation
\begin{equation}
\label{eq:enumerate}
\left\|\sum_{i=1}^d x_i \vec{b}_i \right\| \leq \|\vec{b}_1\|.
\end{equation}
It keeps the shortest non-zero vector~$\sum_{i=1}^d x_i \vec{b}_i$, which
attains the lattice minimum, extends it into a lattice basis and
HKZ-reduces the projection of the last~$d-1$ vectors orthogonally to
the first one.

The computationally dominant step is the second one, i.e., solving
Equation~(\ref{eq:enumerate}). It is performed by enumerating all
integer points within hyper-ellipsoids. Equation~(\ref{eq:enumerate})
implies that:
\[ 
|x_d| \cdot \|\vec{b}_d^*\| \leq \|\vec{b}_1\|.
\]
We consider all the possible integers~$x_d$ that satisfy this
equation. For any of them, we consider the following equation, which
also follows from Equation~(\ref{eq:enumerate}):
\[ 
|x_{d-1} + \mu_{d,d-1} x_d| \cdot \|\vec{b}_{d-1}^*\| \leq 
\left(\|\vec{b}_1\|^2 - x_d \|\vec{b}_d^*\|^2\right)^{1/2}.
\] 
This gives a finite number of possibilities for the integer~$x_{d-1}$
to be explored. 

Suppose that~$(x_{i+1},\ldots,x_d)$ have been
chosen. We then consider the following consequence of
Equation~(\ref{eq:enumerate}):
\[
\left| x_i + \sum_{j>i} \mu_{j,i} x_j \right| \cdot \|\vec{b}_i^*\| \leq
\left(\|\vec{b}_1\|^2 - \sum_{j>i} \left(x_j+\sum_{k>j} \mu_{k,j} x_k
\right) \|\vec{b}_j^*\|^2 \right)^{1/2},
\]
which gives a finite number of possibilities to be considered for the
integer~$x_i$.

Overall, Equation~(\ref{eq:enumerate}) is solved by enumerating all
the integer points within the hyper-ellipsoids~$\mathcal{E}_i =
\left\{ (y_i,\ldots,y_d) \in \mR^{d-i+1}, \|\sum_{j>i} y_j
\vec{b}_j(i)\| \leq \|\vec{b}_1\| \right\}$.

\subsection{On the cost of Kannan's algorithm}

In this subsection, we provide a worst-case complexity lower bound to
Kannan's algorithm by considering that the worst-case HKZ-reduced
bases built in he previous section. For these, the first step of
Kannan's algorithm has no effect, and we give a lower-bound to the
cost of the second one by providing a lower bound to the sum of the
cardinalities of the sets~$\mathcal{E}_i \cap \mZ^{d-i+1}$.

\begin{lemma}
\label{le:kannan_lower}
Let~$(\vec{b}_1,\ldots,\vec{b}_d)$ be a lattice basis. The number of
points enumerated by Kannan's algorithm is at least the sum of the
number of integer points in each of the hyperellipsoids
\[
{\mathcal E}'_i = \left\{ (y_i,\ldots,y_d) \in (\mR \setminus
\{0\})^{d-i+1}, \sum_{j\geq i} y_j^2 \|\vec{b}_j^*\|^2 \leq \frac{4}{5}
\|\vec{b}_1\|^2 \right\}.
\]
\end{lemma}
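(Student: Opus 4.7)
The plan is to fix $i \in \iinterv{1,d}$ and construct an injection from $\mathcal{E}'_i \cap \mZ^{d-i+1}$ into $\mathcal{E}_i \cap \mZ^{d-i+1}$; the lemma then follows by summing the resulting per-level inequalities over~$i$, since the enumeration tree of Kannan's algorithm has, at depth~$d-i+1$, exactly one node per integer tuple $(x_i,\ldots,x_d)$ satisfying the cumulative bound which defines~$\mathcal{E}_i$.

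The natural candidate for this injection is the Babai ``nearest-plane'' lift along the Gram--Schmidt skew. Given $(y_i,\ldots,y_d) \in \mathcal{E}'_i \cap \mZ^{d-i+1}$, set $x_d = y_d$ and, recursively for $j = d-1$ down to $i$,
\[
x_j \;=\; y_j \;-\; \Bigl\lfloor \sum_{l>j} \mu_{l,j}\, x_l \Bigr\rceil.
\]
Each $x_j$ is an integer, and the map $(y_i,\ldots,y_d) \mapsto (x_i,\ldots,x_d)$ is in fact a bijection of $\mZ^{d-i+1}$ onto itself (invertible by $y_j = x_j + \lfloor \sum_{l>j} \mu_{l,j} x_l \rceil$), which takes care of injectivity for free. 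Writing $z_j := x_j + \sum_{l>j} \mu_{l,j} x_l$, one has $z_j = y_j + \theta_j$ with $|\theta_j| \leq 1/2$ for $i \leq j < d$ and, crucially, $\theta_d = 0$.

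The remaining and central step is to check the inclusion $(x_i,\ldots,x_d) \in \mathcal{E}_i$, i.e.\ $\sum_{j\geq i} z_j^2 \|\vec{b}_j^*\|^2 \leq \|\vec{b}_1\|^2$. Expanding $(y_j+\theta_j)^2 = y_j^2 + 2 y_j\theta_j + \theta_j^2$, the nonzero-integer constraint $|y_j|\geq 1$ yields the global inequality $\sum \|\vec{b}_j^*\|^2 \leq \sum y_j^2\|\vec{b}_j^*\|^2$, while $|\theta_j|\leq 1/2$ and $\theta_d = 0$ control the error terms; applying Cauchy--Schwarz to the cross sum $\sum 2 y_j\theta_j\|\vec{b}_j^*\|^2$ then bounds $\sum z_j^2\|\vec{b}_j^*\|^2$ by an absolute-constant multiple of $\sum y_j^2\|\vec{b}_j^*\|^2 \leq (4/5)\|\vec{b}_1\|^2$. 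The factor $4/5$ in the definition of $\mathcal{E}'_i$ is calibrated precisely so that this product stays at most $\|\vec{b}_1\|^2$, yielding the desired inclusion.

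The chief obstacle is making this final weighted estimate tight enough. A purely coordinatewise bound $(y_j+\theta_j)^2 \leq (9/4) y_j^2$ is saturated at $|y_j|=1$, $|\theta_j|=1/2$ and would only allow the weaker threshold $4/9$; pushing the usable constant up to the announced $4/5$ requires handling the cross term globally via Cauchy--Schwarz rather than termwise, and simultaneously exploiting the identity $z_d = y_d$ together with the integer-based inequality $\sum \|\vec{b}_j^*\|^2 \leq \sum y_j^2 \|\vec{b}_j^*\|^2$. Once this quantitative inequality is in hand, summation over $i$ immediately gives the stated lower bound on the total number of enumerated integer points.
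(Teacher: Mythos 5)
Your construction is the same as the paper's: the Babai nearest-plane lift $\phi$ is exactly the injection used there, the injectivity argument is identical, and so is the reduction to the weighted inequality $\sum_{j\ge i}(y_j+\theta_j)^2\|\vec{b}_j^*\|^2\le\|\vec{b}_1\|^2$. The gap is in the quantitative step you yourself flag as the ``chief obstacle'': Cauchy--Schwarz does not push the constant from $4/9$ to $4/5$. Writing $S=\sum_{j\ge i} y_j^2\|\vec{b}_j^*\|^2$, your ingredients give $\sum_j \theta_j^2\|\vec{b}_j^*\|^2 \le \frac14\sum_j\|\vec{b}_j^*\|^2\le \frac14 S$ and hence, by Cauchy--Schwarz, $2\sum_j y_j\theta_j\|\vec{b}_j^*\|^2 \le 2\sqrt{S}\sqrt{S/4}=S$, so that $\sum_j(y_j+\theta_j)^2\|\vec{b}_j^*\|^2\le \frac94 S$ --- exactly what the coordinatewise bound already gives. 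This is unavoidable: the configuration $|y_j|=1$ and $\theta_j=\frac12\,\mathrm{sign}(y_j)$ for $j<d$ saturates the ratio $9/4$ in every coordinate simultaneously, and the constraint $\theta_d=0$ removes only one term, whose weight $\|\vec{b}_d^*\|^2$ may be negligible compared to the others. So no global treatment of the cross term can beat $9/4$, and your argument only establishes the inclusion $\phi(\mathcal{E}'_i\cap\mZ^{d-i+1})\subseteq\mathcal{E}_i\cap\mZ^{d-i+1}$ with $\frac49$ in place of $\frac45$ in the definition of $\mathcal{E}'_i$.

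You should know that the paper's own proof has the same defect in a sharper form: it asserts the termwise bound $(y_j+\delta_j)^2\le\frac54 y_j^2$, which already fails at $|y_j|=1$, $|\delta_j|=\frac12$, where the left-hand side equals $\frac94$. The lemma should be read with $\frac49$ (or any smaller constant) in place of $\frac45$; with that correction both your argument and the paper's go through verbatim, and Theorem~\ref{th:kannan_lower} is unaffected, since the change of constant is absorbed into the $2^{-O(d)}$ factor. If one insists on the constant $\frac45$, one would have to exploit that the $\theta_j$ are not adversarial but are the specific fractional parts $\{\sum_{l>j}\mu_{l,j}x_l\}$ produced by the lift --- something neither you nor the paper attempts.
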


\begin{proof}
Let $\phi: \mR^{d-i+1} \rightarrow \mR^{d-i+1}$ be defined by
$\phi(y_i,\ldots,y_d) = (z_i,\ldots,z_d)$ such that~$z_i = y_i -
\left\lfloor \sum_{k>j} \mu_{k,j}z_j \right\rceil$. The function~$\phi$
is injective. Indeed, $\phi(y_i,\ldots,y_d) = (z_i,\ldots,z_d)$
implies that $y_j = z_j + \left\lfloor \sum_{k>j} \mu_{k,j}z_j
\right\rceil$, which means that~$(z_i,\ldots,z_d)$ uniquely
determines~$(y_i,\ldots,y_d)$.  Furthermore,
\[
\sum_{j \geq i} z_j \vec{b}_j(i) =
\sum_{j \geq i} \left(z_j + \sum_{k>j} \mu_{k,j} z_k\right) \vec{b}_j^*=
\sum_{j \geq i} (y_j + \delta_j) \vec{b}_j^*,
\]
for some~$\delta_j \in [-1/2, 1/2]$. Hence, for~$(y_i,\ldots, y_d) \in
{\mathcal E}'_i \cap \mZ^{d-i+1}$, the~$z_i$'s are integers and
\[
\left\|\sum_{j \geq i} z_j \vec{b}_j(i) \right\| 
= \sum_{j \geq i} (y_j + \delta_j)^2 \|\vec{b}_j^*\|^2
\leq \sum_{j \geq i} \frac{5}{4} y_j^2 \|\vec{b}_j^*\|^2
\leq \|\vec{b}_1\|^2.
\]
This implies that if~$(y_i,\ldots,y_d) \in {\mathcal E}'_i \cap
\mZ^{d-i+1}$ then~$\phi(y_i,\ldots,y_d) \in {\mathcal
  E}_i \cap \mZ^{d-i+1}$ is indeed considered.
\qed
\end{proof}

We can now provide a lower bound to the cost of Kannan's algorithm.
This lower bound is essentially the best possible, since it matches
the upper bound of~\cite{HaSt07}. This also shows that the worst-case
HKZ-reduced bases are worst-case inputs for Kannan's algorithm.

\begin{theorem}
\label{th:kannan_lower}
Let~$(\vec{b}_1,\ldots,\vec{b}_d)$ be a lattice basis.  Let~$i$ be 
such that~$\|\vec{b}_j^* \| \leq
\frac{\|\vec{b}_1\|}{\sqrt{d}}$ for all~$j \geq i$. Then, the number
of points considered by Kannan's algorithm is at least
\[
2^{-d+i-1} \prod_{j \geq i} \frac{\|\vec{b}_1\|}{\sqrt{d} \|\vec{b}_j^*\|}.
\]
In particular, given as input the basis built in the previous section,
Kannan's algorithm performs at least~$d^{\frac{d}{2\e}(1+o(1))}$
operations.
\end{theorem}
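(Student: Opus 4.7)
The plan splits in two. First I would establish the general combinatorial lower bound, by applying Lemma~\ref{le:kannan_lower} to reduce the task to counting integer vectors $(y_i, \ldots, y_d) \in (\Z \setminus \{0\})^{d-i+1}$ inside the ellipsoid $\mathcal{E}'_i$. The hypothesis $\|\vec{b}_j^*\| \leq \|\vec{b}_1\|/\sqrt{d}$ ensures that $M_j := \|\vec{b}_1\|/(\sqrt{d}\,\|\vec{b}_j^*\|) \geq 1$ for $j \geq i$, and restricting to the sub-box $|y_j| \leq (2/\sqrt{5})\, M_j$ makes each coordinate contribute at most $(4/5)\|\vec{b}_1\|^2/d$ to $\sum y_j^2 \|\vec{b}_j^*\|^2$, so after summing the at most $d$ terms the ellipsoid constraint is automatically satisfied. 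A standard estimate on the number of nonzero integers in an interval yields at least $M_j/2$ admissible choices per coordinate (the factor $1/2$ absorbing the boundary loss when $M_j$ is close to~$1$), and taking the product gives the claimed $2^{-(d-i+1)} \prod_{j \geq i} M_j$.

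To derive the consequence for the explicit basis I would feed into this bound the construction of Corollary~\ref{co:shape}: with $(1+\log C)/2 = -5/2$ one has $\|\vec{b}_j^*\| = (d-j+1)^{-5/2} \exp(\log^2(d-j+1)/4 + O(1))$. Setting $a = d-j+1$ and $g(a) = a^{-5/2} \exp(\log^2 a/4)$, this profile is bowl-shaped with minimum at $a = \e^5$. Solving $g(a^*) = g(d)/\sqrt{d}$ asymptotically yields $a^* = (d/\e)(1 + o(1))$, so that taking $i := d - a^* + 1$ verifies the hypothesis $\|\vec{b}_j^*\| \leq \|\vec{b}_1\|/\sqrt{d}$ for every $j \geq i$ (using the monotonicity of $g$ on the relevant sub-interval).

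It only remains to evaluate $S := \log \prod_{j \geq i} M_j = a^* \left(\log g(d) - \frac{1}{2}\log d\right) - \sum_{a=1}^{a^*}\log g(a)$. Approximating $\sum_{a \leq a^*}\log a$ and $\sum_{a \leq a^*}\log^2 a$ by the corresponding integrals, the leading terms of order $(d/\e)\cdot\log^2 d/4$ coming from both contributions cancel by the defining equation of $a^*$, and the surviving dominant term is $a^* \log d/2 = (d \log d)/(2\e)$. The $(d-i+1)\log 2 = O(d/\e)$ correction from the $2^{-(d-i+1)}$ factor is of strictly lower order, so the final bound $d^{d/(2\e)(1+o(1))}$ follows. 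The delicate point --- and the main obstacle --- is precisely this exact cancellation of the leading terms: the announced exponent is hidden in the second order of the asymptotic expansion, and extracting it cleanly demands careful bookkeeping of the error terms in the integral estimates of $\sum \log a$ and $\sum \log^2 a$.
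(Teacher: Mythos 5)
Your overall strategy matches the paper's: invoke Lemma~\ref{le:kannan_lower}, inscribe an axis-aligned box in ${\mathcal E}'_i$, count nonzero integers coordinate by coordinate, and then instantiate the bound for the explicit basis at an index~$i$ with $d-i+1 \sim d/\e$. One quantitative slip in the counting step: after shrinking the box to $|y_j|\le (2/\sqrt{5})M_j$ with $M_j=\|\vec{b}_1\|/(\sqrt{d}\,\|\vec{b}_j^*\|)$, the interval contains \emph{no} nonzero integer whenever $1\le M_j<\sqrt{5}/2$, so the claimed ``at least $M_j/2$ choices per coordinate'' fails there and the stated bound $2^{-d+i-1}\prod_{j\ge i}M_j$ is not obtained for every basis satisfying the hypothesis. (The paper keeps the full box $|y_j|\le M_j$ and counts $2\lfloor M_j\rfloor-1$ points per coordinate; it is itself somewhat loose about whether that box lies inside the $\frac{4}{5}\|\vec{b}_1\|^2$ ellipsoid.) This does not threaten the asymptotic conclusion, since the offending coordinates can be discarded at the price of a $2^{O(d)}$ factor, but it does mean the first assertion of the theorem is not fully established as stated.

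Where you genuinely diverge is in evaluating $\prod_{j\ge i}M_j$. The paper uses the defining recursion $f_{\psi,d}(i)^{d-i+1}=\psi(d-i+1)^{\frac{d-i+1}{2}}\prod_{k\ge i}f_{\psi,d}(k)$ to get the \emph{exact} identity $\prod_{k\ge i}f_{\psi,d}(i)/f_{\psi,d}(k)=(C(d-i+1))^{\frac{d-i+1}{2}}$, so that only the single ratio $\|\vec{b}_1\|/\|\vec{b}_i^*\|$ needs asymptotic treatment. You instead sum $\log M_j$ directly from the asymptotic profile of Corollary~\ref{co:shape} via integral estimates of $\sum\log a$ and $\sum\log^2 a$. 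Your computation is correct: the $\frac{a^*}{4}\log^2 d$ terms do cancel and the residue is $\frac{a^*}{2}\log d=\frac{d\log d}{2\e}(1+o(1))$. Note, though, that the cancellation comes from $\log a^*\sim\log d$ and would occur for any $a^*=\Theta(d)$; the defining equation of $a^*$ only fixes the coefficient of the surviving $a^*\log d$ term (and $a^*\sim d/\e$ is indeed the optimal admissible choice). The paper's algebraic shortcut buys exactly what you identify as the main obstacle: it eliminates the second-order bookkeeping in the integral approximations.
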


\begin{proof}
The set~${\mathcal E}'_i$ contains the subset 
\[
\prod_{j \geq i}^d \left(\left[-\frac{\|\vec{b}_1\|}{\sqrt{d}
    \|\vec{b}_j^*\|}, \frac{\|\vec{b}_1\|}{\sqrt{d} \|\vec{b}_j^*\|}
  \right] \setminus \{ 0\} \right) .
\]
This means that the cardinality of~${\mathcal E}'_i \cap \mZ^{d-i+1}$
is greater than
\[
\prod_{j \geq i} \left( 2 \left \lfloor \frac{\|\vec{b}_1\|}{\sqrt{d}
  \|\vec{b}_j^*\|}\right \rfloor - 1 \right) 
\geq 
\prod_{j \geq i}
\left(2 \frac{\|\vec{b}_1\|}{\sqrt{d} \|\vec{b}_j^*\|} - \frac{3}{2} \right) 
\geq
\frac{1}{2^{d-i+1}} \prod_{j \geq i}^d 
\frac{\|\vec{b}_1\|}{\sqrt{d} \|\vec{b}_j^*\|}.\]
This proves the first part of the theorem. It remains to evaluate this
quantity for the basis built in the previous section. For this basis,
we have, for any~$i \leq d$,
\[
\prod_{j \geq i} \frac{\|\vec{b}_i^*\|}{\|\vec{b}_j^*\|} = 
(\sqrt{C(d - i + 1)})^{d-i+1}. 
\]

As a consequence, the
number of operations performed by Kannan's algorithm given this basis
as input is greater than
\[
\left( \frac{C (d-i+1)}{4d} \right)^{\frac{d-i+1}{2}}
\cdot \left( \frac{\|\vec{b}_1\|}{\|\vec{b}_i^*\|} \right)^{d-i+1},
\]
for any~$i$ such that~$\|\vec{b}_j^* \| \leq
\frac{\|\vec{b}_1\|}{\sqrt{d}}$ for $j \geq i$.  We choose~$i = \left
\lfloor d\left(1 - \frac{1}{\e}\right) + \alpha\frac{d}{\log d} \right
\rceil$, for some $\alpha$ to be fixed later. Let~$j \geq
i$. According to Corollary~\ref{co:shape}, if~$d-j \rightarrow +
\infty$, we have
\begin{eqnarray*}
2 \log \frac{\|\vec{b}_j^*\|}{\|\vec{b}_1\|} 
& = & 
\frac{\log^2(d - j + 1) - \log^2 d}{2} + 
(1+\log C) \left(\log (d-j+1) - \log d \right)
+ O(1)\\
& \leq & 
\log \frac{d-j+1}{d} \left(\log d + 1+\log C \right)
+ O(1)\\
& \leq & 
\log \frac{d-i+1}{d} \left(\log d + 1+\log C \right) 
+ O(1)\\
& \leq & 
\log \left( \frac{1}{\e} - \frac{\alpha}{\log d} +O\left( \frac{1}{d} \right)
\right)
\left( \log d + 1+\log C \right) 
+ O(1)\\
& \leq & 
- \log d - \alpha \e  +O(1).
\end{eqnarray*}

For~$\alpha$ and $d$~large enough, we shall indeed have
$\|\vec{b}_j^*\| \leq \frac{\|\vec{b}_1\|}{\sqrt{d}}$ for any~$j \geq
i$.  Hence, since for this value of~$i$ we have
$\left(\frac{\sqrt{d-i+1}}{\sqrt{d}} \right)^{d-i+1} = 2^{-O(d)}$ and
$\left( \frac{\|\vec{b}_1\|}{\|\vec{b}_i^*\|}\right)^{d-i+1} =
d^{\frac{d}{2\e}} / 2^{O(d)}$, the lower bound
becomes~$d^{\frac{d}{2\e}} / 2^{O(d)}$, which concludes the
proof of the theorem.  \qed
\end{proof}

\subsection{On Schnorr's Constants}

First of all, we improve the best known upper bound for~$\alpha_k$
from~$k^{\log k+1}$ to~$k^{\frac{\log k}{2}+O(1)}$. We will see below
that this improved upper bound is essentially the best possible.

\begin{theorem}
\label{th:alpha_k}
Let~$k \geq 2$. Then~$\alpha_k \leq k^{\frac{\log k}{2} +O(1)}$.
\end{theorem}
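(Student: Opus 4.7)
The plan is to exploit the primary Minkowski inequalities that come for free from HKZ-reducedness, and extract a sharp recursion on the quantities $\log(\|\vec{b}_i^*\|^2/\|\vec{b}_k^*\|^2)$. Since the bound $\gamma_d \leq d$ is quite cheap, the gain over Schnorr's $k^{1+\log k}$ bound must come from a better bookkeeping of the recursion, not from a sharper use of Hermite's constant.

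Concretely, for an HKZ-reduced basis $(\vec{b}_1,\ldots,\vec{b}_k)$, I would apply Minkowski's inequality to each projected sublattice spanned by $(\vec{b}_i(i),\ldots,\vec{b}_k(i))$ (which is itself HKZ-reduced), yielding
\[
\|\vec{b}_i^*\|^2 \leq (k-i+1) \cdot \left(\prod_{j=i}^k \|\vec{b}_j^*\|^2\right)^{\frac{1}{k-i+1}}
\qquad \text{for all } 1 \leq i \leq k.
\]
Setting $s_i := \log(\|\vec{b}_i^*\|^2/\|\vec{b}_k^*\|^2)$, so that $s_k=0$ and $s_1 = \log \alpha_k$ (for the extremal basis), the log of the above inequality rearranges to
\[
(k-i)\,s_i \ \leq \ (k-i+1)\log(k-i+1) + \sum_{j=i+1}^{k-1} s_j.
\]

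Now I would introduce the change of variable $T_a := s_{k-a}$ for $0 \leq a \leq k-1$, which turns the recursion into
\[
a\, T_a \ \leq \ (a+1)\log(a+1) + \sum_{b=1}^{a-1} T_b.
\]
Writing the same inequality at index $a-1$ and subtracting produces the telescoping estimate
\[
T_a - T_{a-1} \ \leq \ \log\!\left(\frac{a+1}{a}\right) + \frac{\log(a+1)}{a}.
\]
Summing from $a=1$ to $k-1$ collapses the first term to $\log k$, and the second sum is a Riemann-type approximation of $\int_1^k \frac{\log x}{x}\,\mathrm{d}x = \frac{1}{2}\log^2 k$, giving
\[
\log \alpha_k \ = \ T_{k-1} \ \leq \ \frac{1}{2}\log^2 k + O(\log k),
\]
which is exactly the claimed bound $\alpha_k \leq k^{\frac{\log k}{2}+O(1)}$.

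There is no real obstacle: the only subtle step is noticing that differencing consecutive primary Minkowski inequalities linearises the recursion and eliminates the $\sum s_j$ tail, which is precisely what saves the factor $2$ in the exponent compared with a direct induction. The rest is elementary and amounts to bounding $\sum_{a=1}^{k-1}\frac{\log(a+1)}{a}$ by $\frac{1}{2}\log^2 k + O(\log k)$ via comparison with the corresponding integral.
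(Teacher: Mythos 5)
Your overall route is the same as the paper's: start from the primary Minkowski inequalities $\|\vec{b}_i^*\|^{k-i}\leq \sqrt{k-i+1}^{\,k-i+1}\prod_{j>i}\|\vec{b}_j^*\|$, view them as a recursion, and solve the extremal case to get $\log\alpha_k\leq\frac12\log^2k+O(\log k)$. The derivation of $(k-i)s_i\leq (k-i+1)\log(k-i+1)+\sum_{j=i+1}^{k-1}s_j$ and the final summation $\sum_a\bigl(\log\frac{a+1}{a}+\frac{\log(a+1)}{a}\bigr)=\log k+\frac12\log^2k+O(1)$ are both correct.

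However, the step ``writing the same inequality at index $a-1$ and subtracting'' is not a valid inference: from $aT_a\leq R_a+\sum_{b<a}T_b$ and $(a-1)T_{a-1}\leq R_{a-1}+\sum_{b<a-1}T_b$ you cannot conclude $T_a-T_{a-1}\leq R_a-R_{a-1}+T_{a-1}$ divided by $a$, because subtracting requires the second inequality in the \emph{reverse} direction. Indeed, if $T_{a-1}$ sits well below its recursive upper bound, the single-step difference $T_a-T_{a-1}$ can exceed $\log\frac{a+1}{a}+\frac{\log(a+1)}{a}$. The standard repair — and exactly what the paper does with its sequence $u_i$ — is to define $U_a$ by the recursion \emph{with equality} ($U_0=0$, $aU_a=(a+1)\log(a+1)+\sum_{b=1}^{a-1}U_b$), check by induction that $T_a\leq U_a$ (the recursion is monotone in the earlier terms), and then difference the \emph{equalities} to get $U_a-U_{a-1}=\log\frac{a+1}{a}+\frac{\log(a+1)}{a}$; the paper's computation of the ratios $u_i/u_{i+1}$ is the multiplicative form of this. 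With that one sentence added, your argument is complete and coincides with the paper's proof.
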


\begin{proof}
Let~$(\vec{b}_1,\ldots,\vec{b}_k)$ be an HKZ-reduced basis. For any~$i$, we have
\[
\|\vec{b}_i^*\|^{k-i} \leq \sqrt{k-i+1}^{k-i+1} \prod_{j>i} \|\vec{b}_j^*\|
\]
Let the sequence~$u_i$ be defined by~$u_k = \|\vec{b}_k^*\|$
and~$u_i^{k-i} = \sqrt{k-i+1}^{k-i+1} \prod_{j>i} u_j$. Then the
sequence~$u_i$ dominates the sequence~$\|\vec{b}_i^*\|$. Moreover,
\[
\frac{u_i}{u_{i+1}} =
\frac{\sqrt{k-i+1}}{\sqrt{k-i}} \sqrt{k-i+1}^{\frac{1}{k-i}},
\]
which implies that
\[
\frac{\|\vec{b}_1\|}{\|\vec{b}_k^*\|} \leq \frac{u_1}{u_k} \leq 
\sqrt{k} \prod_{i < k} \sqrt{i}^{\frac{1}{i-1}}
\leq O(1) \sqrt{k} k^{\frac{\log k}{4}} .
\]
This concludes the proof. \qed
\end{proof}

We now show that the new upper bound on~$\alpha_k$ and the upper
bound~$\beta_k \leq \frac{1}{10}k^{2\log 2}$ are essentially the best
possible. They are in particular essentially reached for the
worst-case HKZ-reduced bases of the previous section.

\begin{theorem}
\label{th:Schnorr_constants}
Let~$k \geq 2$. We have:
\begin{eqnarray*}
\alpha_k = k^{\frac{\log k}{2} + O(1)} \ \mbox{ and } \ \beta_k = k^{2\log
  2+O\left(\frac{1}{\log k} \right)}.
\end{eqnarray*}
\end{theorem}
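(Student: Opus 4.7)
The plan is to derive both lower bounds by evaluating the quantities defining $\alpha_k$ and $\beta_k$ on the HKZ-reduced bases produced by Corollary~\ref{co:shape}. Combined with the upper bound from Theorem~\ref{th:alpha_k} for $\alpha_k$ and with the upper bound $\beta_k \leq \frac{1}{10} k^{2\log 2}$ from~\cite{GaHoKoNg06}, the matching lower bounds yield the two claimed equalities.

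For $\alpha_k$, I would instantiate Corollary~\ref{co:shape} at dimension $d=k$. The normalisation $f_{\psi,k}(k)=1$ gives $\|\vec{b}_k^*\| = 1$, while $\|\vec{b}_1\| = f_{\psi,k}(1)$ is controlled by the asymptotic expansion stated in the corollary: $2\log \|\vec{b}_1\| = (1+\log C)\log k + (\log^2 k)/2 + O(1)$. The dominant $(\log^2 k)/2$ term immediately gives $\alpha_k \geq \|\vec{b}_1\|^2/\|\vec{b}_k^*\|^2 \geq k^{(\log k)/2 + O(1)}$, matching Theorem~\ref{th:alpha_k}.

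For $\beta_k$, I would instantiate Corollary~\ref{co:shape} at dimension $d = 2k$ and, reindexing by $m = 2k - i + 1$, write the squared log-norms in the explicit form
\[
h(m) := 2\log f_{\psi,2k}(2k-m+1) = \log m + \log C \cdot H_{m-1} + \sum_{r=2}^m \frac{\log r}{r-1},
\]
where $H_j$ is the $j$-th harmonic number. Then $\log \beta_k \geq \frac{1}{k}\bigl(\sum_{m=k+1}^{2k} h(m) - \sum_{m=1}^k h(m)\bigr)$, and I would analyse the three resulting contributions separately. The $\log m$ part telescopes to $\frac{1}{k}\log\binom{2k}{k} = 2\log 2 + O(\log k/k)$. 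The harmonic part reduces, by swapping the order of summation, to $\frac{\log C}{k}\sum_{m=1}^k (H_{k+m-1} - H_{m-1})$, a quantity that is easily shown to equal $2\log C \cdot \log 2 + O(1/k) = O(1)$. The double sum involving $\log r/(r-1)$ is the delicate term: writing $\log r/(r-1) = \log r / r + O(\log r / r^2)$, using that $\sum_{r=2}^m \log r / r = (\log^2 m)/2 + O(1)$, and approximating the outer sum by $\int_1^k \log^2(k+m)\,dm - \int_1^k \log^2 m\,dm$ via Euler--Maclaurin gives a leading contribution of $2\log 2 \cdot \log k + O(1)$. Summing the three parts yields $\log \beta_k \geq 2\log 2\cdot \log k + O(1)$, equivalent to $\beta_k \geq k^{2\log 2 + O(1/\log k)}$.

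The main technical obstacle is the careful control of remainder terms in the double sum: one must show that replacing both $\log r/(r-1)$ by $\log r/r$ and the inner sums by integrals produces only $O(k)$ error, so that after division by $k$ the remainder is genuinely $O(1)$ and therefore only $O(1/\log k)$ when converted to an exponent. Everything else is bookkeeping around the explicit formula of Corollary~\ref{co:shape}, combined with classical estimates for $H_k$, $\binom{2k}{k}$ and $\int \log^2 x\,dx$.
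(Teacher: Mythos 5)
Your proposal is correct, and for $\alpha_k$ it coincides with the paper's argument: instantiate the worst-case basis of Corollary~\ref{co:shape} in dimension $k$, use $\|\vec{b}_k^*\|=1$ and the asymptotic expansion of $2\log\|\vec{b}_1\|$ to get the lower bound $k^{\frac{\log k}{2}+O(1)}$, and match it with Theorem~\ref{th:alpha_k}. For $\beta_k$ the two routes genuinely differ. You compute $\frac{1}{k}\bigl(\sum_{i\le k}-\sum_{i>k}\bigr)\log\|\vec{b}_i^*\|^2$ head-on from the explicit product formula, which forces you through the harmonic-number sum, $\log\binom{2k}{k}$, and an Euler--Maclaurin treatment of the double sum $\sum_m\sum_{r\le m}\log r/(r-1)$; all of these estimates check out (the leading term $2\log 2\cdot\log k$ does come from $\frac{1}{2k}\bigl(\sum_{m=k+1}^{2k}\log^2 m-\sum_{m\le k}\log^2 m\bigr)$, and the error analysis you flag as the main obstacle indeed only needs to be $O(k)$ before division). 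The paper instead exploits the defining recursion $f_{\psi,d}(i)=\sqrt{\psi(d-i+1)}\bigl(\prod_{l=i}^d f_{\psi,d}(l)\bigr)^{1/(d-i+1)}$ at $i=1$ and $i=k+1$: writing the ratio as $\det(L)/\prod_{i>k}\|\vec{b}_i^*\|^2$, both the numerator and the denominator collapse into powers of $f(1)/\sqrt{\psi(2k)}$ and $f(k+1)/\sqrt{\psi(k)}$, so the whole quantity equals $\bigl(\text{const}\cdot f(1)/f(k+1)\bigr)^{2k}$ and only the single asymptotic $\bigl(f(1)/f(k+1)\bigr)^4=\exp\bigl(\log^2(2k)-\log^2 k+O(1)\bigr)=k^{2\log 2}e^{O(1)}$ is needed. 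The paper's telescoping buys a three-line proof with no summation estimates; your direct computation buys independence from the special self-similar form of $f_{\psi,d}$ and would still work for any profile given only termwise asymptotics of $\log\|\vec{b}_i^*\|^2$, at the cost of the bookkeeping you describe.
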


\begin{proof}
Consider a worst-case~$k$-dimensional HKZ-reduced basis as described
in the previous section. We have~$\|\vec{b}_k^*\|=1$, and  
$\|\vec{b}_1\| = k^{\log k - O(1)}$ follows from Corollary~\ref{co:shape}.

Now, we consider a worst-case~$2k$-dimensional HKZ-reduced
basis~$(\vec{b}_1,\ldots,\vec{b}_{2k})$ of a lattice~$L$ as described
in the previous section. We have the following lower bounds:
\[
\frac{\prod_{i \leq k} \|\vec{b}_i^*\|}{\prod_{i>k} \|\vec{b}_i^*\|} 
= \frac{\det(L)}{\prod_{i>k} \|\vec{b}_i^*\|^2} 
= 
\left(\frac{\sqrt{2k}}{\sqrt{k}}
\frac{\|\vec{b}_1\|}{\|\vec{b}_{k+1}^*\|}  \right)^{2k}.
\]
Furthermore, $\left(\frac{\|\vec{b}_1\|}{\|\vec{b}_{k+1}^*\|}\right)^4 =
\exp\left(\log^2(2k) - \log^2(k) + O(1)\right) = k^{2\log 2}
\exp(O(1))$, as claimed.  \qed
\end{proof}

\section{Difficult Bases for the BKZ Reductions}
\label{se:BKZ}
In this section, we build lattice bases that are~$k$-BKZ reduced, but far
from being fully HKZ-reduced. In the previous section, we showed lower
bounds to Schnorr's constants appearing in the quality analysis of the
hierarchies of reductions. Here we prove lower bounds on the quality
itself. Note that the lower bounds that we obtain are of the same
order of magnitude as the corresponding upper bounds, but the involved
constants are smaller. This suggests that it may not be possible to
combine worst cases for Schnorr's constants in order to build bad
bases for the BKZ hierarchy of reductions and that better upper bounds
may be proved by using an amortised analysis.

In the following, we fix a block-size~$k$. The strategy used to prove
the existence of the basis is almost the same as in
Section~\ref{se:ajtai}. The sole difference is that when we add a new
basis vector~$\vec{b}_j$, we only
require~$(\vec{b}_{j-k+1}(j-k+1),\ldots,\vec{b}_j(j-k+1))$ to be
HKZ-reduced instead of~$(\vec{b}_1, \ldots, \vec{b}_j)$. This
modification provides us the following result.

\begin{theorem}
\label{th:ajtai_for_BKZ}
Let~$d>k$ and $f: \iinterv{1, d} \rightarrow \mR^+ \setminus \{0\}$. 
Assume that for any~$j \leq d$, one has
\[
\sum_{i=\max(j-k+1,1)}^{j-1} 
\left(\frac{2\pi \mathrm{e}}{j-i}\right)^{\frac{j-i}{2}} 
\left(
1 - \left(\frac{f(j)}{f(i)}\right)^2\right)_+^{\frac{j-i}{2}} 
\left( \prod_{l=i}^j \frac{f(i)}{f(l)} \right) < 1.
\]
Then there exists a~$k$-BKZ-reduced basis
$(\vec{b}_1,\ldots,\vec{b}_d)$ with~$\|\vec{b}_i^*\| = f(i)$.
\end{theorem}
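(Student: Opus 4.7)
The plan is to mimic the iterative construction of Theorem~\ref{th:ajtai}, relaxing the set of constraints checked at each step to match the $k$-BKZ condition rather than full HKZ-reducedness. I would build the basis vector by vector, at step~$j$ choosing the Gram-Schmidt coefficients $\mu_{j,l}$ ($l<j$) independently and uniformly in $[-1/2,1/2]$, with $\|\vec{b}_j^*\|=f(j)$ prescribed.

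Unrolling the definition, the $k$-BKZ condition on the final basis is equivalent to: for every $i'\in\iinterv{1,d}$, $\vec{b}_{i'}^*$ is a shortest non-zero vector of $L(\vec{b}_{i'}(i'),\ldots,\vec{b}_{\min(i'+k-1,d)}(i'))$. Indeed, the within-block HKZ conditions involve $\vec{b}_{i'}^*$ being shortest in analogous sublattices with admissible endpoints $j'\in[i',\min(i'+k-1,d)]$, and these are all sublattices of the maximal one, hence automatically checked. In the iterative construction, the constraint associated with index~$i'$ becomes fully specified at the step where the index $j=\min(i'+k-1,d)$ is added, so at step~$j$ the newly completed constraints are those indexed by $i'\in[\max(1,j-k+1),j-1]$. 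For each such $i'$, I would apply Lemma~\ref{le:core} to the sublattice basis $(\vec{b}_{i'}(i'),\ldots,\vec{b}_j(i'))$ to bound the failure probability $p_{i',j}$ by exactly the term appearing in the hypothesis of the theorem. A union bound over this restricted range, combined with the assumption that the sum is $<1$, gives positive success probability at step~$j$; iterating over $j=1,\ldots,d$ and multiplying positive probabilities produces the desired basis.

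The main point requiring care, and the reason the truncation $i\geq j-k+1$ is exactly the right one, concerns the applicability of Lemma~\ref{le:core}: it bounds only the probability of a shorter vector with nonzero coefficient on the newly added $\vec{b}_j(i')$, so one must independently rule out shorter vectors supported on $(\vec{b}_{i'}(i'),\ldots,\vec{b}_{j-1}(i'))$. For $i'\leq j-2$ this exclusion follows by induction, since the range $[\max(1,j-k),j-2]$ of indices checked at step~$j-1$ contains our range $[\max(1,j-k+1),j-2]$ and so $\vec{b}_{i'}^*$ is already known to be shortest in $L(\vec{b}_{i'}(i'),\ldots,\vec{b}_{j-1}(i'))$; for $i'=j-1$ the sublattice is one-dimensional and the exclusion is trivial. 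This inductive closure is exactly what the restriction of the sum to $i\geq j-k+1$ is designed to enable.
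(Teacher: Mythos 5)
Your proposal is correct and follows the same route as the paper, which only sketches this proof in one sentence: build the basis iteratively, and at step~$j$ require merely that the last (possibly partial) block $(\vec{b}_{\max(1,j-k+1)}(\cdot),\ldots,\vec{b}_j(\cdot))$ be HKZ-reduced, applying Lemma~\ref{le:core} and a union bound over $i\in\iinterv{\max(1,j-k+1),j-1}$. Your write-up is in fact more careful than the paper's, correctly identifying the inductive invariant that lets Lemma~\ref{le:core} apply (any shorter vector must have nonzero last coordinate); the only quibble is the phrase ``newly completed constraints,'' since for $k\le j<d$ only the constraint $i'=j-k+1$ is final at step~$j$ and the others are checked to keep the induction closed, exactly as your last paragraph explains.
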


We now give a function~$f$ that fulfils the requirements of
Theorem~\ref{th:ajtai_for_BKZ}.

\begin{corollary}
\label{cor:bad_bkz0}
Let $k$ be an integer and $c<1$ be a constant such that
\[
\sum_{l=1}^{k-1} \left(\frac{4\pi \e}{lc} \sinh(-l \log
c)\right)^{\frac{l}{2}} < 1.
\]
Then, there exists a k-BKZ-reduced
basis~$(\vec{b}_1,\ldots,\vec{b}_d)$ with~$\|\vec{b}_i^*\| = c^i$.
\end{corollary}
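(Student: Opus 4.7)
The plan is to apply Theorem~\ref{th:ajtai_for_BKZ} directly with the choice $f(i) = c^i$, so the entire content of the proof is an algebraic verification that the hypothesis of the corollary is precisely the hypothesis of that theorem for this $f$.

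First I would plug $f(i) = c^i$ into the sum appearing in Theorem~\ref{th:ajtai_for_BKZ}. Setting $m = j - i$, I get $f(j)/f(i) = c^m < 1$, so $(1-(f(j)/f(i))^2)_+ = 1 - c^{2m}$, and $\prod_{l=i}^j f(i)/f(l) = \prod_{l=i}^j c^{i-l} = c^{-m(m+1)/2}$. Hence for each $j$ the $j$-th condition of Theorem~\ref{th:ajtai_for_BKZ} reads
\[
\sum_{m=1}^{\min(j-1,k-1)} \left(\frac{2\pi \mathrm{e}}{m}\right)^{m/2} (1-c^{2m})^{m/2}\, c^{-m(m+1)/2} < 1.
\]
Because all summands are non-negative, the $j$-th condition is stronger as $j$ grows and is at its worst when $\min(j-1,k-1) = k-1$, i.e., when $j \geq k$. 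Thus it suffices to check the single inequality with $m$ ranging from $1$ to $k-1$.

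Next I would identify this single inequality with the one stated in the corollary. Using $\sinh(-m\log c) = (c^{-m} - c^m)/2$, one computes
\[
\frac{4\pi \mathrm{e}}{mc}\sinh(-m\log c) = \frac{2\pi \mathrm{e}(1 - c^{2m})}{m\, c^{m+1}},
\]
and raising this to the power $m/2$ gives exactly $\left(\frac{2\pi\mathrm{e}}{m}\right)^{m/2} (1-c^{2m})^{m/2}\, c^{-m(m+1)/2}$. So the condition of the corollary matches term by term the condition required on $f(i)=c^i$.

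There is no real obstacle here: the only thing to watch is that Theorem~\ref{th:ajtai_for_BKZ} requires a condition for every $j\leq d$, whereas the corollary states just one inequality. This is handled by the observation above that the condition is most stringent for $j\geq k$ and becomes strictly weaker for $j<k$. Once this has been pointed out, a direct application of Theorem~\ref{th:ajtai_for_BKZ} to the function $f(i)=c^i$ on $\iinterv{1,d}$ yields a $k$-BKZ-reduced basis $(\vec{b}_1,\ldots,\vec{b}_d)$ with $\|\vec{b}_i^*\| = c^i$, which is exactly the assertion of the corollary.
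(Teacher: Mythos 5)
Your proposal is correct and follows essentially the same route as the paper: substitute $f(i)=c^i$ into Theorem~\ref{th:ajtai_for_BKZ}, compute $\prod_{l=i}^j f(i)/f(l) = c^{-m(m+1)/2}$ with $m=j-i$, rewrite each summand as $\bigl(\tfrac{2\pi\e}{m}(1-c^{2m})c^{-(m+1)}\bigr)^{m/2}$, identify this with the $\sinh$ expression, and observe that since the terms are non-negative and $d>k$, the single inequality with $l$ ranging up to $k-1$ dominates all the conditions indexed by $j$. Nothing is missing.
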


\begin{proof}
Let~$f(i) = c^i$ for any~$i \leq d$. The condition of
Theorem~\ref{th:ajtai_for_BKZ} becomes
\[
\forall j \leq d, \ \sum_{i=\max(j-k+1,1)}^{j-1} 
\left(\frac{2\pi \e}{j-i}\left(1 - c^{2(j-i)}\right)c^{-(j-i+1)}
\right)^{\frac{j-i}{2}} < 1,
\]
or equivalently
\[
\forall j \leq d, \ \sum_{l=1}^{\min(k-1,j-1)}
\left(\frac{2\pi \e}{l}\left(1 - c^{2l}\right)c^{-(l+1)}\right)^{\frac{l}{2}} < 1.
\]
Since~$k < d$, this condition is equivalent to the one stated in 
the corollary. 
\qed
\end{proof}

Using the corollary above, one can compute a suitable constant~$c$ for
any given block-size. For~$k = 2$, one can take~$c = 0.972$, for~$k =
3$, one can take~$c = 0.985$ and for~$k \leq 10$, one can take~$c =
0.987$.  The optimal value of~$c$ seems to grow very slowly with~$k$.
However, it does grow since for any fixed~$c$, the general term of the
sum tends to~$+\infty$ when~$l$ grows to~$+\infty$.  We can also
derive the following general result, as soon as the block-size is
large enough:

\begin{corollary}
\label{cor:bad_bkz}
Let~$d>k > 8\pi \e$. There exists a~$k$-BKZ-reduced
basis~$(\vec{b}_1,\ldots,\vec{b}_d)$ of a lattice~$L$
with~$\|\vec{b}_i^*\| = \left(\frac{8\pi
  \e}{k-1}\right)^{\frac{i}{k}}$.  In particular, for any such basis,
we have:
\[
\frac{\|\vec{b}_1\|}{\lambda(L)} \geq \sqrt{d} \left(\frac{k-1}{8\pi
  \e} \right)^{\frac{d-1}{2k}}.
\]

\end{corollary}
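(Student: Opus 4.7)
The natural strategy is to invoke Corollary~\ref{cor:bad_bkz0} with the distinguished ratio $c = \left(\frac{8\pi \e}{k-1}\right)^{1/k}$, which is the boundary value for which the $l=k-1$ term of the sum in Corollary~\ref{cor:bad_bkz0} collapses to $2^{-(k-1)}$: indeed, $(2\pi\e/(k-1))\cdot c^{-k} = 1/4$. Under $k > 8\pi\e$ (implicitly, $k-1 \geq 8\pi\e$) one has $c < 1$, so the claimed profile $\|\vec{b}_i^*\| = c^i$ is decreasing and the existence claim reduces to verifying the sum condition of Corollary~\ref{cor:bad_bkz0}.

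Write $A \assign \log\frac{k-1}{8\pi \e} \geq 0$, so that $c^{-1} = \e^{A/k}$ and $-l\log c = lA/k$. Each summand equals $\left(\frac{4\pi \e}{l}\,\e^{A/k}\sinh\!\left(\tfrac{lA}{k}\right)\right)^{l/2}$. I would apply two elementary bounds on $\sinh$ depending on $l$: for $l \leq k/(2A)$ use $\sinh(x) \leq x\e^x$, giving the $l$-th term $\leq (4\pi \e A/k)^{l/2}\,\e^{l(l+1)A/(2k)}$; for $l > k/(2A)$ use $\sinh(x) \leq \e^x/2$, giving the $l$-th term $\leq (2\pi \e/l)^{l/2}\,\e^{l(l+1)A/(2k)}$, which does collapse to $2^{-(k-1)}$ at the endpoint $l=k-1$. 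The main obstacle is controlling the transition $l \sim k/(2A)$, where both bounds degrade simultaneously: here a first-derivative analysis of $\phi(l) \assign \frac{l}{2}\log\!\frac{4\pi \e A}{k} + \frac{l(l+1)A}{2k}$ pins the maximum of $\phi$ to a value strictly below $0$, i.e.\ every individual summand stays bounded away from $1$. Under $k > 8\pi \e$ the ratio $4\pi \e A/k$ stays controllably small---either because $A$ is small (when $k$ is close to $8\pi \e$), or because $k$ dominates $\log k$ (when $k$ is large)---which keeps the two partial sums jointly below $1$.

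Once existence is granted, the lower bound on $\|\vec{b}_1\|/\lambda(L)$ is a direct computation. Since $\|\vec{b}_1\| = \|\vec{b}_1^*\| = c$ and $\det(L) = \prod_{i=1}^d c^i = c^{d(d+1)/2}$, Minkowski's inequality $\lambda(L) \leq \sqrt{d}\,(\det L)^{1/d} = \sqrt{d}\, c^{(d+1)/2}$ yields
\[
\frac{\|\vec{b}_1\|}{\lambda(L)} \;\geq\; \frac{c^{-(d-1)/2}}{\sqrt{d}} \;=\; \frac{1}{\sqrt{d}}\left(\frac{k-1}{8\pi \e}\right)^{(d-1)/(2k)},
\]
which exhibits the exponential gap in $d/k$ announced in the corollary, up to a polynomial factor in $d$. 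The heart of the argument is thus entirely contained in the sinh estimate; everything else is bookkeeping.
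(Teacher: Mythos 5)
Your reduction to Corollary~\ref{cor:bad_bkz0} with $c=\left(\frac{8\pi\e}{k-1}\right)^{1/k}$, and your endpoint computation $\frac{2\pi\e}{k-1}c^{-k}=\frac14$, are exactly right; your Minkowski computation for the second claim also matches the paper's own proof (both in fact yield $\frac{1}{\sqrt d}\left(\frac{k-1}{8\pi\e}\right)^{\frac{d-1}{2k}}$ --- the $\sqrt d$ in the numerator of the stated corollary appears to be a slip in the paper, not in your derivation). The gap is in the existence half. Your two-regime $\sinh$ estimate does not close the argument: pinning the maximum of your $\phi(l)$ strictly below $0$ only shows that each summand is $<1$, which says nothing about a sum of up to $k-1$ terms; and the final claim that the ``two partial sums [are] jointly below $1$'' is asserted, not proved. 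Moreover the regime-one bound $\left(\frac{4\pi\e A}{k}\right)^{l/2}\e^{l(l+1)A/(2k)}$ is genuinely too lossy to rescue by bookkeeping: for $k$ of order $8\pi\e^2$ the ratio $\frac{4\pi\e A}{k}$ is near its maximum (about $0.18$) while $\frac{(l+1)A}{2k}$ reaches $\frac14$ at the top of that regime, so the per-term bound decays only like $\e^{-0.6l}$, and $\sum_{l\ge1}\e^{-0.6l}>1$. The transition you flag as ``the main obstacle'' really is one, and the sketch does not surmount it.

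The missing idea --- and the one the paper uses --- is a monotonicity observation that makes the case split unnecessary. The base of the $l$-th term, $\frac{4\pi\e}{lc}\sinh(-l\log c)$, is increasing in $l$, because $x\mapsto\frac{\sinh(ax)}{x}$ is increasing on $x>0$ when $a=-\log c>0$ (this is the $\tanh$ derivative computation in the paper's proof). Hence the base is maximised at $l=k-1$, where $\sinh(-(k-1)\log c)\le\frac12 c^{-(k-1)}$ gives the value $\frac{2\pi\e}{k-1}c^{-k}=\frac14$. Every term is therefore at most $4^{-l/2}=2^{-l}$, and $\sum_{l=1}^{k-1}2^{-l}<1$. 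Your endpoint computation already contains the constant $\frac14$; what you need is to propagate it to all $l$ via this monotonicity rather than via separate upper bounds on $\sinh$.
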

\begin{proof}
Let~$c=\left(\frac{8\pi \e}{k-1}\right)^{\frac{1}{k}}$ and~$\phi: x
\mapsto \frac{1}{x} \sinh (x \log c)$. We have that
\[
\phi'(x)= - \frac{1}{x^2} \sinh (x \log c) + \frac{\log c}{x} \cosh (x
\log c) = \frac{\cosh(x \log c)}{x^2} ( - \tanh(x \log c) + x\log c).
\]
Since~$\tanh x \leq x$ for any~$x<0$, we have that the function~$\phi$
decreases when~$x<0$. As a consequence, we obtain that for any~$l <
k$,
\[
\frac{4\pi \e}{lc} \sinh (-l\log c)
\leq
\frac{2 \pi \e}{(k-1)} c^{-k} 
\leq 1/4.
\]

It follows that the condition of Theorem~\ref{th:ajtai_for_BKZ} is
satisfied. It now remains to give a lower bound to~$\|\vec{b}_1\| /
\lambda(L)$. We have~$\|\vec{b}_1\| = 
\left(\frac{8\pi
  \e}{k-1}\right)^{\frac{1}{k}}$ and Minkowski's theorem gives us that
\[
\lambda (L) \leq
\sqrt{d} \left( \prod_i\|\vec{b}_i^*\| \right)^{\frac{1}{d}}
= \sqrt{d} \left(\frac{8\pi \e}{k-1}\right)^{\frac{d+1}{2k}}.
\]
This directly provides the second claim of the theorem.
\qed
\end{proof}

By comparing to~$1$ the last term of the sum in
Corollary~\ref{cor:bad_bkz0}, one sees that the following must hold:
\[
(c^{-k} - c^{k+2}) \leq \frac{k-1}{2\pi\e}.
\]
This means that, apart from replacing~$8\pi \e$ by~$2\pi \e$ in
Corollary~\ref{cor:bad_bkz}, one cannot hope for a much better constant
by using our technique.

\section{Concluding Remarks}
\label{se:open}

We showed the existence of bases that are particularly bad from
diverse perspectives related to strong lattice reductions and strong
lattice reduction algorithms. A natural extension of our work would be
to show how to generate such bases efficiently, for example by showing
that the probabilities of obtaining bases of the desired properties
can be made extremely close to~$1$. Another difficulty related to this
goal will be to transfer the results from the continuous model, i.e.,
$\mR^n$, to a discrete space, e.g., $\mQ^n$ with a bound on denominators. 

Our results allow to claim that some algorithms/reductions are better
than others from the worst-case asymptotic complexity point of view.
This only gives a new insight on what should be done in practice. It
is well-known (see~\cite{NgSt06} about the LLL algorithm)
that low-dimensional lattices may behave quite differently from
predicted by the worst-case high-dimensional results.

\section*{Acknowledgements}
This work was initiated during the July~2007 seminar ``Explicit
methods in Number Theory'' at Mathematisches Forschungsinstitut
Oberwolfach.  The authors are grateful to the MFO for the great
working conditions provided on this occasion.  The authors would also
like to thank Jacques Martinet for the interest he showed for a
preliminary version of those results and for pointing~\cite{PeZw07}.
The second author thanks John Cannon and the University of Sydney for
having hosted him while some of the present work was completed.

\newcommand{\SortNoop}[1]{}

\section*{Proof of Theorem~\ref{th:main}}

This section is devoted to proving Theorem~\ref{th:main}. Since~$\exp(5) > 
2\pi \e (\sqrt{\e}+1)^2$, it suffices to prove the following result.

\begin{theorem}
\label{th:main2}
Let~$\psi(x) = C \cdot x$ with~$C=\exp(-6)$. Then for 
all~$1 \leq i < j \leq d$, we have
\[
(j - i + 1)^{-\frac{j - i}{2}} 
\left(1 - \left(\frac{f_{\psi,d}(j)}{f_{\psi,d}(i)}\right)^2 \right)_+^{\frac{j-i}{2}} 
\left( \prod_{k=i}^j \frac{f_{\psi,d}(i)}{f_{\psi,d}(k)} \right) 
\leq  
\exp \left(- \frac{5}{2} (j-i)\right),
\]
where~$f_{\psi,d}(d)=1$ and~$f_{\psi,d}(i)= \sqrt{\psi(d-i+1)} \cdot \left(
\prod_{k=i}^d f_{\psi,d}(k) \right)^{\frac{1}{d-i+1}}$. 
\end{theorem}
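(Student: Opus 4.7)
The plan is to first unwind the recursive definition of $f_{\psi,d}$ into an explicit product (essentially Lemma~\ref{le:explf} already referenced in the body of the paper), yielding
\[
f_{\psi,d}(i) \;=\; \sqrt{C(d-i+1)} \cdot \prod_{l=i+1}^d \bigl(C(d-l+2)\bigr)^{\frac{1}{2(d-l+1)}},
\]
so that $f(i)/f(k)$ and $\prod_{k=i}^j f(i)/f(k)$ can be written as closed-form products in $C$ and in the shifted indices $a = d-i+1$, $b = d-j+1$. A direct check shows that both $f(i)/f(k)$ and the target inequality depend only on $a$ and $b$ (not on $d$), which reduces the statement to the two-parameter inequality
\[
a^{-\frac{a-b}{2}} \Bigl(1 - \bigl(f(j)/f(i)\bigr)^2\Bigr)_+^{\frac{a-b}{2}} \prod_{k=i}^j \frac{f(i)}{f(k)} \;\leq\; e^{-\frac{5}{2}(a-b)}
\]
in the region $1 \le b < a$.

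The first step I would then carry out is to separate the factor without the $(1-(f(j)/f(i))^2)_+$ term and show
\[
(j-i+1)^{-\frac{j-i}{2}} \prod_{k=i}^j \frac{f(i)}{f(k)} \;\leq\; 1,
\]
which follows directly from the defining primary Minkowski-type equation: raising both sides of $f(i)^{d-i+1} = C(d-i+1)\prod_{k=i+1}^d f(k)^{?}$ or, more cleanly, using the explicit formula to telescope the ratios. Having fixed this cruder bound in hand, the remaining task is to extract an exponential factor of size $e^{-\frac{5}{2}(a-b)}$ from the product of (i) the $(1-(f(j)/f(i))^2)^{\frac{a-b}{2}}$ piece and (ii) the slack between the crude bound and the actual inequality (the ``$C$'' and the ``$x$'' parts of $\psi$).

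I would then split the analysis into the four regimes already outlined in the excerpt. In the \emph{close-indices} regime $a \ge b \ge a - 1.65\,a/(\log a)^3$, the factor $1 - (f(j)/f(i))^2$ becomes small — quantitatively, using the explicit product formula one checks $f(j)^2/f(i)^2 = 1 - \Theta(\frac{(a-b)\log a}{a})$, and this smallness, raised to the power $(a-b)/2$, produces more than $e^{-\frac{5}{2}(a-b)}$ for $a$ large enough. In the \emph{moderate} regime $a - 1.65\,a/(\log a)^3 \ge b \ge \kappa a$, I would estimate $\log\prod_{k=i}^j f(i)/f(k)$ via the integral approximation developed in the proof of Corollary~\ref{co:shape} (so that $2\log f(i) \sim \frac{\log^2 a}{2} + (1+\log C)\log a$), whence the ``$x$'' factor in $\psi(x)=Cx$ gives an exponent strictly smaller than $-\frac{5}{2}$ per unit of $a-b$, with arbitrarily large margin as $a$ grows. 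In the \emph{far-apart} regime $b = o(a)$, the additional $C^{(a-b)/2} = e^{-3(a-b)}$ slack from $\psi$ absorbs the remaining factors, dominating the slowly-varying polynomial terms.

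The main obstacle is twofold. Analytically, one must be careful to make the transitions between the three asymptotic regimes seamless, i.e., choose the thresholds $1.65\,a/(\log a)^3$ and $\kappa$ so that the bounds overlap and the error terms $O(1)$ appearing in the $\log f(i)$ expansion are uniformly controlled by the slack chosen in $C = e^{-6}$ (this is where the specific numerical constants $6$ and $5/2$ come in). Numerically, the three asymptotic arguments only kick in once $a$ is beyond some explicit threshold; for all pairs $(a,b)$ below it, the inequality must be verified by a finite computer-assisted computation using the closed-form expression for $f_{\psi,d}(i)$. The final assembly simply collects these four case bounds into the exponential bound $e^{-\frac{5}{2}(a-b)}$, which is exactly what Theorem~\ref{th:main2} asserts.
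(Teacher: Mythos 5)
Your overall architecture (explicit product formula, change of variables to $a=d-i+1$, $b=d-j+1$, the bound $\leq 1$ without the $T_1$ factor, and the four regimes with thresholds $1.65\,a/(\log a)^3$, $\kappa a$, and a finite numerical check) is the paper's architecture, but two steps that you treat as routine are where the actual work lies, and as written they are gaps. First, you claim that
\[
(j-i+1)^{-\frac{j-i}{2}} \prod_{k=i}^{j} \frac{f_{\psi,d}(i)}{f_{\psi,d}(k)} \leq 1
\]
``follows directly from the defining primary Minkowski-type equation'' by telescoping. It does not: the defining relation for $f_{\psi,d}$ only ties $f_{\psi,d}(i)$ to the \emph{full} tail product $\prod_{k=i}^{d}$, i.e.\ to the pair $(i,d)$, whereas here you need the truncated product up to $j<d$. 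The paper has to prove this via a monotonicity lemma for $x\mapsto -\log x+\log(x+1)\bigl(1-\frac{b-1}{x}\bigr)$ (with a separate treatment of $b=2$), followed by a comparison of the sum with $(a-b)\log(a-b+1)$ plus a convexity argument in $b$ for the residual term; none of this is a one-line telescoping. (Relatedly, your displayed two-parameter inequality has $a^{-\frac{a-b}{2}}$ where it must be $(a-b+1)^{-\frac{a-b}{2}}$, since $j-i+1=a-b+1$; proving the version with $a^{-\frac{a-b}{2}}$ would be proving a strictly weaker statement.)

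Second, in the moderate regime you propose to estimate $\log\prod_{k=i}^{j} f_{\psi,d}(i)/f_{\psi,d}(k)$ via the integral approximation of Corollary~\ref{co:shape}, whose error terms are $O(1)$ per value of $\log f_{\psi,d}(k)$. Summed over $k=i,\ldots,j$ this gives an error of order $a-b$ with an \emph{unspecified} constant, which cannot be absorbed into a target of exactly $-\frac{5}{2}(a-b)$; you flag this as ``the main obstacle'' but offer no mechanism to overcome it. The paper avoids asymptotics entirely here: it bounds the exact sum by explicit functions $\alpha(a,b)$ and $\beta(a,b)$, then establishes monotonicity of $a\mapsto \alpha(a,\kappa a)+\beta(a,\kappa a)\log C$ and a precise description of the shape of $\kappa\mapsto\tau(a,\kappa)$ (increase--decrease--increase, located via a third-derivative/convexity computation), reducing the verification to a few explicit endpoint evaluations at $a=158000$. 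Without this, or an equivalent fully explicit estimate, the regime $\kappa a\leq b\leq a-1.65\,a/(\log a)^3$ is not actually covered, and the choice of the cutoff $158000$ and of $C=e^{-6}$ cannot be justified. The close-indices and far-apart regimes and the numerical check are sketched consistently with the paper, but they inherit the first gap since they also need the product factor bounded by $1$.
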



We shall work separately with the following two terms of the theorem:
\[
\left(1 - \left(\frac{f_{\psi,d}(j)}{f_{\psi,d}(i)}\right)^2 \right)_+^{\frac{j-i}{2}} 
\ \mbox{ and } \ 
\left( \prod_{k=i}^j \frac{f_{\psi,d}(i)}{f_{\psi,d}(k)} \right).
\]
We call these terms~$T_1$ and~$T_2$. Another notation that we use
is~$a = d-i+1$ and~$b=d-j+1$, which is natural since the 
function~$x \mapsto f(d-x+1)$ does not depend on~$d$. The domain
of valid pairs~$(a,b)$ is~$1 \leq b < a \leq d$.

Notice that if~$j=d$, then we can use the definition of~$f_{\psi,d}$,
and by bounding~$T_1$ by~$1$, we obtain the sufficient condition:
\[
\sqrt{d-i+1} \exp(-3 (d-i+1)) \leq \exp \left(- \frac{5}{2} (d-i)\right),
\]
which is valid. In the following, we will assume that~$j<d$.

Our proof is made of four main steps. The first step consists in
simplifying the expressions of the terms~$T_1$ and~$T_2$. In the
second step, we try to obtain the result without the first term, i.e.,
while bounding~$T_1$ by~$1$. We reach this goal for~$a \geq 158000$
along with~$b \leq a - \frac{1.65}{\log^3 a}$. In the third step, we
use~$T_2$ to obtain the result for~$a \geq 158000$ along with~$b \geq a
- \frac{1.65}{\log^3 a}$. Finally, we prove the result for~$1 \leq b < a
\leq 158000$ with an exhaustive check of the inequality to be
satisfied.

\subsection{Explicit Formulas}

The results of this subsection remain correct for any function~$\psi$.

\begin{lemma}
\label{le:explf}
The following holds for any~$k > i$:
\[
\frac{f_{\psi, d}(i)}{f_{\psi, d}(k)}
=
\sqrt{\frac{\psi(d - i + 1)}{\psi(d - k + 1)}} \cdot
\prod_{\ell = i+1}^k \psi(d - \ell + 2)^{\frac{1}{2(d - \ell + 1)}}. 
\]
\end{lemma}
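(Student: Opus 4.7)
The plan is to reduce everything to the one-step ratio $f_{\psi,d}(i)/f_{\psi,d}(i+1)$ and then telescope over $i$. The defining equation naturally suggests this because consecutive indices share most of the tail product that appears on the right-hand side.

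First, I would rewrite the defining equation
\[
f_{\psi,d}(i) = \sqrt{\psi(d-i+1)}\cdot\left(\prod_{k=i}^{d} f_{\psi,d}(k)\right)^{1/(d-i+1)}
\]
by raising both sides to the power $d-i+1$, obtaining the cleaner product identity
\[
f_{\psi,d}(i)^{d-i+1} \;=\; \psi(d-i+1)^{(d-i+1)/2}\;\prod_{\ell=i}^{d} f_{\psi,d}(\ell).
\]
Dividing this identity by the analogous one at $i+1$, the common tail $\prod_{\ell=i+1}^{d} f_{\psi,d}(\ell)$ cancels and the isolated factor $f_{\psi,d}(i)$ on the right moves over to the left, yielding
\[
\left(\frac{f_{\psi,d}(i)}{f_{\psi,d}(i+1)}\right)^{d-i} \;=\; \frac{\psi(d-i+1)^{(d-i+1)/2}}{\psi(d-i)^{(d-i)/2}}.
\]
Taking the $(d-i)$-th root then gives the one-step formula
\[
\frac{f_{\psi,d}(i)}{f_{\psi,d}(i+1)} \;=\; \sqrt{\frac{\psi(d-i+1)}{\psi(d-i)}}\cdot \psi(d-i+1)^{1/(2(d-i))}.
\]

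For the general case $k>i$, I would write $f_{\psi,d}(i)/f_{\psi,d}(k)$ as the telescoping product $\prod_{j=i}^{k-1} f_{\psi,d}(j)/f_{\psi,d}(j+1)$ and plug in the one-step formula. The square-root factors telescope immediately to $\sqrt{\psi(d-i+1)/\psi(d-k+1)}$, while in the remaining factor the change of variable $\ell=j+1$ converts $\prod_{j=i}^{k-1}\psi(d-j+1)^{1/(2(d-j))}$ into the advertised $\prod_{\ell=i+1}^{k}\psi(d-\ell+2)^{1/(2(d-\ell+1))}$. The argument is entirely mechanical; there is no genuine obstacle, only careful index bookkeeping, with the boundary case $k=d$ handled through the normalisation $f_{\psi,d}(d)=1$.
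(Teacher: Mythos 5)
Your proof is correct and follows essentially the same route as the paper's: both derive the one-step ratio $f_{\psi,d}(i)/f_{\psi,d}(i+1)$ by quotienting the product identities obtained from raising the defining equation to the power $d-i+1$, and then telescope (the paper phrases the final step as induction). The only caveat --- shared with the paper's own proof --- is that the defining equation is not actually imposed at $i=d$ (it would force $\psi(1)=1$), so the last telescoping step when $k=d$ implicitly treats $f_{\psi,d}(d)$ as $\sqrt{\psi(1)}$ rather than $1$; this perturbs the stated formula only by a harmless constant factor.
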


\begin{proof}
We have
\[
f_{\psi, d}(i)^{d-i} = \psi(d - i + 1)^{\frac{d - i + 1}{2}} 
\cdot \prod_{k = i + 1}^d f_{\psi, d}(k)\]
and
\[f_{\psi, d}(i+1)^{d-i} = \psi(d - i)^{\frac{d - i}{2}} \cdot 
\prod_{k = i + 1}^d f_{\psi, d}(k).\]
By taking the quotient, we obtain 
\[
\frac{f_{\psi, d}(i)}{f_{\psi, d}(i+1)} = 
\sqrt{\frac{\psi(d - i + 1)}{\psi(d - i)}} \cdot 
\psi(d - i + 1)^{\frac{1}{2(d - i)}}.
\]
The lemma follows by induction. 
\qed
\end{proof}

The following lemma simplifies the expression of the term~$T_2$.

\begin{lemma}
\label{le:explicitpsi}
The following holds for any~$j>i$:
\[
\prod_{k=i+1}^j \frac{f_{\psi, d}(i)}{f_{\psi, d}(k)}
= 
\left( \prod_{l=i+1}^j \frac{\psi(d-i+1) 
\psi(d-l+2)}{\psi(d-l+1)(d - l + 2)^{\frac{d-j}{d-l+1}}}
\right)^{\frac{1}{2}}.
\]
\end{lemma}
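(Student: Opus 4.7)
The plan is to apply Lemma~\ref{le:explf} factor by factor and then rearrange a double product. First, by Lemma~\ref{le:explf}, for every $k$ with $i < k \le j$ we have
\[
\frac{f_{\psi,d}(i)}{f_{\psi,d}(k)} = \sqrt{\frac{\psi(d-i+1)}{\psi(d-k+1)}}\cdot\prod_{\ell=i+1}^{k} \psi(d-\ell+2)^{\frac{1}{2(d-\ell+1)}}.
\]
Multiplying these $j-i$ equalities together, the single product contributes a factor of $\psi(d-i+1)^{j-i}$ in the numerator and $\prod_{k=i+1}^{j}\psi(d-k+1)$ in the denominator (under the square root), while the nested product collects contributions from pairs $(k,\ell)$ with $i+1\le\ell\le k\le j$.

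Next, I would swap the order of the double product, writing
\[
\prod_{k=i+1}^{j}\prod_{\ell=i+1}^{k} (\cdot) = \prod_{\ell=i+1}^{j}\prod_{k=\ell}^{j}(\cdot).
\]
Because the inner factor depends only on $\ell$, the inner product over $k$ simply multiplies its exponent by $j-\ell+1$. Combining, the exponent of $\psi(d-\ell+2)$ becomes $\frac{j-\ell+1}{2(d-\ell+1)}$, so
\[
\prod_{k=i+1}^{j}\frac{f_{\psi,d}(i)}{f_{\psi,d}(k)} = \left(\prod_{\ell=i+1}^{j}\frac{\psi(d-i+1)\cdot\psi(d-\ell+2)^{(j-\ell+1)/(d-\ell+1)}}{\psi(d-\ell+1)}\right)^{1/2},
\]
after re-indexing the denominator product by $k\mapsto\ell$.

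Finally, I would invoke the elementary identity $\frac{j-\ell+1}{d-\ell+1}=1-\frac{d-j}{d-\ell+1}$, which rewrites $\psi(d-\ell+2)^{(j-\ell+1)/(d-\ell+1)}$ as $\psi(d-\ell+2)\big/\psi(d-\ell+2)^{(d-j)/(d-\ell+1)}$, giving the announced formula. The entire argument is a bookkeeping exercise on indices, so I do not expect any substantive obstacle; the only delicate step is carefully inverting the order of summation in the exponents when going from the $k$-indexed product to the $\ell$-indexed one, in particular tracking the multiplicity $j-\ell+1$ correctly.
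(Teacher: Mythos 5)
Your argument is correct, and every step checks out: multiplying Lemma~\ref{le:explf} over $k=i+1,\dots,j$, interchanging the double product so that $\psi(d-\ell+2)^{\frac{1}{2(d-\ell+1)}}$ acquires multiplicity $j-\ell+1$, and then using $\frac{j-\ell+1}{d-\ell+1}=1-\frac{d-j}{d-\ell+1}$ yields exactly the claimed identity. The paper reaches the same formula by a slightly different bookkeeping route: it writes $\prod_{k=i+1}^{j}\frac{f_{\psi,d}(i)}{f_{\psi,d}(k)}$ as $\bigl(\prod_{k=i+1}^{d}\frac{f_{\psi,d}(i)}{f_{\psi,d}(k)}\bigr)\bigl(\prod_{k=j+1}^{d}\frac{f_{\psi,d}(j)}{f_{\psi,d}(k)}\bigr)^{-1}\bigl(\frac{f_{\psi,d}(i)}{f_{\psi,d}(j)}\bigr)^{j-d}$, evaluates the first two factors directly from the defining relation of $f_{\psi,d}$, and uses Lemma~\ref{le:explf} only for the last factor; your version uses Lemma~\ref{le:explf} alone and a product interchange, which is arguably more direct, while the paper's version avoids the double product at the cost of introducing the truncation-to-$d$ trick. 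One remark: your final expression carries $\psi(d-\ell+2)^{\frac{d-j}{d-\ell+1}}$ in the denominator, whereas the lemma as printed has $(d-l+2)^{\frac{d-j}{d-l+1}}$ without the $\psi$; the paper's own proof (and the subsequent formula for $T_2$, which reads $\psi(l+1)^{\frac{b-1}{l}}$) confirms that the $\psi$ version is the intended one, so you have proved the statement the paper actually uses and the discrepancy is a typo in the lemma's statement, not a gap in your argument.
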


\begin{proof}
We have
\[
\prod_{k=i+1}^j \frac{f_{\psi, d}(i)}{f_{\psi, d}(k)} =
\left(\prod_{k=i+1}^d \frac{f_{\psi, d}(i)}{f_{\psi, d}(k)} \right)
\cdot \left(\prod_{k=j+1}^d \frac{f_{\psi, d}(j)}{f_{\psi, d}(k)} \right)^{-1}
\cdot \left(\frac{f_{\psi, d}(i)}{f_{\psi, d}(j)}\right)^{j-d}.
\]
The first two terms can be made explicit by using the definition 
of~$f_{\psi, d}$, and the last one has been studied in 
Lemma~\ref{le:explf}. We get: 
\begin{eqnarray*}
\prod_{k=i+1}^j \frac{f_{\psi, d}(i)}{f_{\psi, d}(k)} & = &  
\frac{ \psi(d-i+1)^{\frac{d-i+1}{2}}}{\psi(d-j+1)^{\frac{d-j+1}{2}}} \cdot 
\left( \frac{\psi(d-i+1)}{\psi(d-j+1)} \right)^{\frac{j-d}{2}} \cdot
\left( \prod_{l=i+1}^j \psi(d-l+2)^{\frac{j-d}{2(d-l+1)}} \right) \\
& = & 
\frac{\psi(d-i+1)^{\frac{j-i+1}{2}}}{\psi(d-j+1)^{\frac{1}{2}}} \cdot 
\prod_{l=i+1}^j \psi(d - l + 2)^{\frac{j-d}{2(d-l+1)}} \\
& = & 
\left( \prod_{l=i+1}^j \frac{\psi(d-i+1) 
\psi(d-l+2)}{\psi(d-l+1)\psi(d - l + 2)^{\frac{d-j}{(d-l+1)}}}
\right)^{\frac{1}{2}},
\end{eqnarray*}
as claimed. 
\qed
\end{proof}

Note that by writing~$a = d - i + 1$ and~$b = d - j + 1$, the two
lemmas above give us:
\[
T_1 = \left(1 - \frac{\psi(b)}{\psi(a)}
\prod_{l=b}^{a-1} \psi (l+1)^{-\frac{1}{l}} \right)_+^{\frac{a-b}{2}} 
\ \mbox{ and } \ 
T_2 = \left( \prod_{l=b}^{a-1} \frac{\psi(a) \psi(l+1)}
{\psi(l)\psi(l+1)^{\frac{b-1}{l}}} \right)^{\frac{1}{2}}.
\]

\subsection{Temptative Proof of Theorem~\ref{th:main2} Without Using~$T_1$}

We consider the logarithm of~$(j - i + 1)^{-\frac{j - i}{2}} T_2$ and
try to show that it is smaller than~$-\frac{5}{2}(j-i)$. Thanks to
Lemma~\ref{le:explicitpsi}, this is equivalent to showing that:
\begin{equation}
\label{eq:ab}
- (a-b) \log (a-b+1)
+  
\sum_{l=b}^{a-1} \left( \log \psi(a) - \log \psi(l) 
+ \log \psi(l+1) \left(1 - \frac{b-1}{l} \right) \right)
\leq  - 5(a-b).
\end{equation}

We first try to simplify the summand.

\begin{lemma}
\label{le:increase}
Let~$b \geq 2$ be an integer. The function $x\in[b, a-1] \mapsto -\log
x + \log (x+1) \left(1 - \frac{b-1}{x}\right)$ is increasing for~$x
\geq b$ if~$b \geq 3$ and for~$x \geq 4$ if~$b=2$.
\end{lemma}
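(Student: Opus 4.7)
The plan is to compute the derivative of $g(x) := -\log x + \log(x+1)\left(1 - \frac{b-1}{x}\right)$ explicitly and reduce positivity of $g'$ to a one-variable inequality that is easy to check. Carrying out the differentiation,
\[
g'(x) = -\frac{1}{x} + \frac{1}{x+1}\left(1 - \frac{b-1}{x}\right) + \frac{(b-1)\log(x+1)}{x^2},
\]
and after combining the first two terms over the common denominator $x(x+1)$, this simplifies to
\[
g'(x) = -\frac{b}{x(x+1)} + \frac{(b-1)\log(x+1)}{x^2}.
\]
Thus the monotonicity claim $g'(x)\geq 0$ is equivalent to showing that
\[
h(x) := (b-1)(x+1)\log(x+1) - bx \;\geq\; 0
\]
on the relevant range of $x$.

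Next, I would establish that $h$ itself is increasing on that range. A direct computation gives $h'(x) = (b-1)\log(x+1) - 1$. For $b \geq 3$ and $x \geq b \geq 3$ one has $(b-1)\log(x+1) \geq 2\log 4 > 1$, while for $b=2$ and $x \geq 4$ one has $(b-1)\log(x+1) \geq \log 5 > 1$; in both regimes $h$ is strictly increasing. Therefore it suffices to verify $h \geq 0$ at the left endpoint.

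The base-case verification splits into two subcases. For $b \geq 3$, one computes $h(b) = (b^2-1)\log(b+1) - b^2$, so $h(b) \geq 0$ is equivalent to $\log(b+1) \geq 1 + \frac{1}{b^2-1}$. Since $\log(b+1)$ is increasing in $b$ while $1 + \frac{1}{b^2-1}$ is decreasing, it is enough to observe that at $b=3$ we have $\log 4 \approx 1.386 > 9/8$. For $b=2$, we need $h(4) = 5\log 5 - 8 \geq 0$, i.e.\ $\log 5 \geq 8/5 = 1.6$, which holds (barely) since $\log 5 \approx 1.6094$.

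The main subtlety, and the only place where some care is required, is the $b=2$ base case, which is genuinely tight: $h(4)$ is positive by less than $5\cdot 10^{-2}$, and already $h(3) = 4\log 4 - 6 < 0$, which is exactly why the lemma must exclude $x=3$ when $b=2$. Everything else is routine analysis, and the argument above produces the monotonicity statement as claimed.
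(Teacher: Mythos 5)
Your proof is correct and follows essentially the same route as the paper's: both compute the same derivative $g'(x) = \frac{(b-1)(x+1)\log(x+1) - bx}{x^2(x+1)}$ and reduce the claim to positivity of the numerator. The only (minor) difference is the endgame: the paper rewrites the condition as $\left(1+\frac{1}{x}\right)\log(x+1) \geq \frac{b}{b-1}$ and checks it via $\frac{b}{b-1}\le 2$, $\frac{5}{4}\log 5 > 2$ and $\frac{4}{3}\log 4 > \frac{3}{2}$, whereas you show the numerator itself is increasing and verify it at the left endpoints; both verifications are routine, and your computations (including the tight case $5\log 5 > 8$ and the observation that $4\log 4 < 6$ forces the exclusion of $x=3$ when $b=2$) are accurate.
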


\begin{proof}
The derivative is $\frac{\log(x+1)(b-1)(x+1) - bx}{x^2(x+1)}$.  It
follows that the function under study is increasing as soon as~$\left(1 +
\frac{1}{x}\right) \log (x+1) \geq \frac{b}{b-1}$.  The result follows
from the facts that~$\frac{b}{b-1} \leq 2$, that~$\frac{5}{4} \log 5 >
2$ and that~$\frac{4}{3}\log 4 > \frac{3}{2}$.  
\qed
\end{proof}

By using Lemma~\ref{le:increase}, we obtain an upper bound to~$T_2$ if we
had taken~$\psi (x) = x$ instead of~$\psi(x) = C\cdot x$.

\begin{lemma}
\label{le:compensation}
The following holds for~$a \geq 8$:
\begin{eqnarray*}
\sum_{x=b}^{a-1} && \left( \log a - \log x + \log (x+1)
\left(1 - \frac{b-1}{x}\right) \right) \\
&& \leq 
(a - b) \log (a - b + 1) + (a - b) 
\left(\log\frac{a^2}{(a - 1)(a - b + 1)} - \frac{b-1}{a-1} \log a \right)
\end{eqnarray*}
\end{lemma}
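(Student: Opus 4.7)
The plan is to peel off the $\log a$ summand, reducing the inequality to a clean statement in the function $h$ of Lemma~\ref{le:increase}, and then apply that monotonicity result term by term.

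First, I would verify the algebraic identity
\[
(a-b)\log(a-b+1) + (a-b)\left(\log\tfrac{a^2}{(a-1)(a-b+1)} - \tfrac{b-1}{a-1}\log a\right) = (a-b)\bigl(\log a + h(a-1)\bigr),
\]
where $h(x) := -\log x + \log(x+1)\bigl(1 - \tfrac{b-1}{x}\bigr)$ is precisely the function of Lemma~\ref{le:increase}. The two occurrences of $\pm(a-b)\log(a-b+1)$ cancel, and the remaining expression $(a-b)\bigl[2\log a - \log(a-1) - \tfrac{b-1}{a-1}\log a\bigr]$ regroups exactly as $(a-b)(\log a + h(a-1))$. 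Since the summand on the LHS of the claim equals $\log a + h(x)$, subtracting $(a-b)\log a$ from both sides reduces the inequality to the clean form
\[
\sum_{x=b}^{a-1} h(x) \le (a-b)\, h(a-1),
\]
i.e., $h(a-1)$ must dominate the arithmetic mean of the $h(x)$.

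The remaining step is to bound this sum using Lemma~\ref{le:increase}. When $b \ge 3$, that lemma gives monotonicity of $h$ on $[b, a-1]$, so $h(x) \le h(a-1)$ for every index in the sum and the inequality follows by summing. When $b = 2$, monotonicity only starts at $x = 4$, so I would split
\[
\sum_{x=2}^{a-1} h(x) = h(2) + h(3) + \sum_{x=4}^{a-1} h(x),
\]
bound the tail by $(a-4)h(a-1)$ via monotonicity, and then check that $h(2) + h(3) \le 2\, h(a-1)$. Substituting the explicit values $h(2) = -\log 2 + \tfrac12 \log 3$ and $h(3) = -\log 3 + \tfrac23 \log 4$ together with $h(a-1) = -\log(a-1) + \log a\cdot \tfrac{a-2}{a-1}$ reduces this to an elementary one-variable inequality in $a$ that holds comfortably once $a$ is reasonably large.

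The main obstacle is the $b = 2$ case for the smallest allowed values of $a$ (near $a = 8$), where the naive split $h(2)+h(3) \le 2h(a-1)$ is tight or narrowly fails; one must then exploit the strict increase of $h$ on $[4, a-1]$ to extract a quantitative slack $h(x) < h(a-1)$ on the interior indices $x \in \{3,4,5,6\}$ that absorbs the small excess coming from $h(2)$. Since Lemma~\ref{le:compensation} is ultimately invoked in the proof of Theorem~\ref{th:main2} only in the regime $a \ge 158000$, the finitely many borderline small-$a$, $b=2$ cases between $a=8$ and where the naive bound clearly takes over can in any event be dispatched by a direct numerical verification of the original inequality.
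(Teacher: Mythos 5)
Your proposal is correct and follows essentially the same route as the paper: the algebraic reduction of the claim to $\sum_{x=b}^{a-1} h(x) \le (a-b)\,h(a-1)$ with $h(x) = -\log x + \log(x+1)\left(1-\frac{b-1}{x}\right)$, a direct application of Lemma~\ref{le:increase} when $b\ge 3$, and a finite check combined with monotonicity from $x=4$ onward when $b=2$. Your diagnosis of the borderline case is accurate --- the naive split $h(2)+h(3)\le 2h(a-1)$ indeed fails narrowly at $a=8$ (though it holds for $a\ge 9$) --- and the paper's fix is the same as the one you sketch: it verifies $a=8$ numerically, which amounts to using the slack of the interior terms $x\in\{3,\dots,6\}$ below $h(7)$ to absorb the excess from $h(2)$, and then bounds the head $x\in\{2,\dots,7\}$ by that case for all larger $a$.
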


\begin{proof}
When $b \geq 3$, the result follows directly from
Lemma~\ref{le:increase}, by noticing that for all $x\in[b,a-1]$ we
have
\[
- \log x + \log (x+1) \left(1 - \frac{b-1}{x}\right)
\leq 
- \log (a-1) + \log (a) \frac{a-b}{a-1}.
\]
Suppose now that~$b=2$. It can be checked numerically that the
inequality holds for~$a=8$. Suppose now that~$a>8$. We have:
\begin{eqnarray*}
\sum_{x=b}^{a-1} \left( \log a - \log x + \log (x+1)
\left(1 - \frac{1}{x}\right) \right) 
&\leq & 6 \log 7 + 6 \left(\log \frac{64}{49}-\frac{1}{7}\log 8 \right)\\
& + & \sum_{x=8}^{a-1} \left(\log a -\log (a-1)+\log (a) \frac{a-b}{a-1}
\right)\\
& = &
\sum_{x=2}^{a-1} \left(\log a -\log (a-1)+\log (a) \frac{a-b}{a-1}\right),
\end{eqnarray*}
which gives the result.
\qed
\end{proof}

Notice that Lemma~\ref{le:compensation} implies that~$T_2$ with~$\psi(x)=x$ 
instead of~$C\cdot x$ already compensates the term~``$(a-b) \log (a-b+1)$'' of
Equation~(\ref{eq:ab}). Indeed, the 
function~$\theta: b \mapsto \log\frac{a^2}{(a - 1)(a - b + 1)} - 
\frac{b-1}{a-1} \log a$ is convex and
\[
\theta(2) = 2\log \frac{a}{a - 1} - \frac{\log a}{a - 1} 
\ \mbox{ and } \
\theta(a - 1) = \log \frac{a}{2(a - 1)} + \frac{\log a}{a - 1}.
\]
Both~$\theta(2)$ and~$\theta(a-1)$, and thus all~$\theta(x)$ 
for~$x\in [2, a-1]$,  are~$\leq 0$ for~$a \geq 8$.

We now consider the left hand-side of Equation~(\ref{eq:ab})
with~$\psi(x) = C \cdot x$.

\begin{lemma}
\label{le:alphabeta}
Let~$\alpha(a, b) = \log \frac{a}{(a - b)} - \frac{b-1}{a-1} \log a$
and~$\beta(a, b) = 1 - \frac{b}{a-b} \log \frac{a}{b}$.
For~$a \geq 8$, we have:
\begin{eqnarray*}
- (a-b) \log (a-b+1)
& + & 
\sum_{l=b}^{a-1} \left( \log \psi(a) - \log \psi(l) 
+ \log \psi(l+1) \left(1 - \frac{b-1}{l} \right) \right)\\
& \leq &
(a-b) \left(\alpha(a, b) + \beta(a,b) \log C \right).
\end{eqnarray*}
\end{lemma}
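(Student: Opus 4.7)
The plan is to substitute $\psi(x)=C\cdot x$ in the left-hand side and split the contributions of $\log x$ and $\log C$. Since $\log\psi(a)-\log\psi(l) = \log a - \log l$, the $\log C$ appears only through the factor $1-(b-1)/l$ multiplying $\log\psi(l+1)$. Thus the LHS rewrites as
\[
-(a-b)\log(a-b+1) \;+\; \Sigma_{\log} \;+\; \log C \cdot \sum_{l=b}^{a-1}\left(1-\frac{b-1}{l}\right),
\]
where $\Sigma_{\log}$ is exactly the sum bounded by Lemma~\ref{le:compensation}. The two pieces will be shown to be at most $(a-b)\alpha(a,b)$ and $(a-b)\beta(a,b)\log C$ respectively.

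For the $\log x$ contribution, I apply Lemma~\ref{le:compensation} directly. Combined with $-(a-b)\log(a-b+1)$ it yields an upper bound of $(a-b)\bigl(\log\frac{a^{2}}{(a-1)(a-b+1)}-\frac{b-1}{a-1}\log a\bigr)$, which is $\leq (a-b)\alpha(a,b)$ as soon as $\log\frac{a^{2}}{(a-1)(a-b+1)}\leq \log\frac{a}{a-b}$. This last inequality is equivalent to $a(a-b)\leq (a-1)(a-b+1)$, and expanding gives $0\leq b-1$, which is our hypothesis.

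For the $\log C$ contribution, since $\log C=-6<0$, the desired bound is equivalent to
\[
(a-b) - (b-1)\sum_{l=b}^{a-1}\frac{1}{l} \;\geq\; (a-b) - b\log(a/b),
\qquad\text{i.e.}\qquad
b\log(a/b)\;\geq\;(b-1)\sum_{l=b}^{a-1}\frac{1}{l}.
\]
For $b\geq 2$ I would use the integral comparison $\sum_{l=b}^{a-1} 1/l \leq \int_{b-1}^{a-1}\mathrm{d}x/x = \log\frac{a-1}{b-1}$ and then prove $b\log(a/b)\geq (b-1)\log\frac{a-1}{b-1}$ by studying $g(b):=b\log(a/b)-(b-1)\log\frac{a-1}{b-1}$. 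A direct differentiation gives $g'(b)=\log\frac{a(b-1)}{b(a-1)}\leq 0$ since $a>b$, so $g$ is nonincreasing on $[2,a-1]$ and $g(b)\geq g(a-1) = h(a-1)-h(a-2)$, where $h(x)=x\log(1+1/x)$. The monotonicity of $h$ (its derivative $\log(1+1/x)-1/(x+1)$ is positive by a one-line integral comparison) gives $g(a-1)>0$. The corner case $b=1$ is trivial since the harmonic sum is weighted by $b-1=0$ and $b\log(a/b)=\log a\geq 0$.

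The main obstacle is the $\log C$ step: the algebraic bookkeeping of the $\log x$ part is routine, whereas the harmonic-sum comparison is genuinely tight, so that both the integral upper bound on the harmonic tail and the monotonicity of $h$ are needed to close it; any weakening of either ingredient would fail at the regime $b$ close to $a$.
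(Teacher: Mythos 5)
Your proof is correct, and the global structure is the same as the paper's: split the left-hand side into the part coming from the ``$x$'' of $\psi(x)=Cx$ and the part carrying $\log C$, absorb the $-(a-b)\log(a-b+1)$ term into the former via Lemma~\ref{le:compensation} together with $(a-1)(a-b+1)\geq a(a-b)$, and reduce the $\log C$ part (using $\log C<0$) to the inequality $b\log(a/b)\geq (b-1)\sum_{l=b}^{a-1}1/l$. The only genuine divergence is how that last inequality is established. The paper disposes of it in one line by invoking $\sum_{x=b+1}^{a}1/x\leq\log(a/b)$; as literally displayed this compares the wrong sum (the coefficient that actually appears is $\sum_{l=b}^{a-1}1/l$, which is $\geq\log(a/b)$, not $\leq$), and the missing slack $(b-1)(\tfrac1b-\tfrac1a)$ is silently covered by the gap between the coefficients $b-1$ and $b$ in $\beta$. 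Your route --- bounding the tail by $\log\frac{a-1}{b-1}$ and then showing $g(b)=b\log(a/b)-(b-1)\log\frac{a-1}{b-1}$ is nonincreasing with $g(a-1)=h(a-1)-h(a-2)>0$ for $h(x)=x\log(1+1/x)$ --- is longer but airtight, and it correctly isolates the near-degenerate regime $b$ close to $a$ where the inequality is tightest; the separate treatment of $b=1$ is also appropriate. So: same decomposition and same key reduction, with a more careful (and, as written, more defensible) finishing argument for the $\log C$ half.
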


\begin{proof}
First of all, we have: 
\[
- (a-b) \log (a-b+1) +  
\sum_{l=b}^{a-1} \left( \log a - \log l 
+ \log (l+1) \left(1 - \frac{b-1}{l} \right) \right)
\leq 
\alpha(a, b).
\]
This follows from Lemma~\ref{le:compensation} and the fact
that~$(a-1)(a-b+1) \geq a(a-b)$.  We now consider the terms depending
on~$C$. Since~$\sum_{x=b+1}^a \frac{1}{x} \leq \log \frac{a}{b}$
and~$\log C <0$, we have:
\[
\sum_{l=b}^{a-1} \left( \log (C) \left(1 - \frac{b-1}{l} \right) \right)
\leq \log(C) \left( a-b - (b-1) \log\frac{a}{b} \right) \leq
\log(C) \beta(a,b),
\]
which gives the result.
\qed
\end{proof}

In the following, we study the function~$(a,b) \mapsto \alpha(a,
b) + \beta(a,b) \log C$. We would like to bound it by~$-5$, be
we will be able to do this only for a subset of all possible values of
the pair~$(a,b)$. 

\begin{lemma}
\label{le:taudecroitalpha}
Let~$0 < \kappa < 1$ be a real constant and suppose that~$a \geq 8$. The
function~$a \mapsto \alpha(a, \kappa a) + \beta(a, \kappa a) \log C$
decreases with respect to~$a$.
\end{lemma}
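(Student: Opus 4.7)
The plan is to make the substitution $b = \kappa a$ explicit in both $\alpha$ and $\beta$, and then reduce the problem to a one-variable calculus check.

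First I would observe the decisive simplification: after substituting $b = \kappa a$, the term $\beta(a,\kappa a)$ does not depend on $a$ at all. Indeed,
\[
\beta(a, \kappa a) = 1 - \frac{\kappa a}{a - \kappa a} \log \frac{a}{\kappa a} = 1 - \frac{\kappa}{1-\kappa} \log \frac{1}{\kappa},
\]
so the $\beta \log C$ summand is a constant (in $a$). Consequently, proving monotonicity of the whole expression reduces to proving that $a \mapsto \alpha(a, \kappa a)$ is decreasing. This is really the only content of the lemma, and it already makes the statement feel quite natural.

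Next I would simplify $\alpha(a, \kappa a)$. Writing $\frac{\kappa a - 1}{a-1} = \kappa - \frac{1-\kappa}{a-1}$, one gets
\[
\alpha(a, \kappa a) = \log \frac{1}{1-\kappa} - \kappa \log a + (1-\kappa)\frac{\log a}{a-1}.
\]
Differentiating with respect to $a$ yields
\[
\frac{\partial}{\partial a}\alpha(a, \kappa a) = -\frac{\kappa}{a} + (1-\kappa) \cdot \frac{(a-1)/a - \log a}{(a-1)^2}.
\]
The first summand is clearly negative. For the second one, I would invoke the standard inequality $\log a \geq 1 - \frac{1}{a} = \frac{a-1}{a}$, valid for all $a \geq 1$ (with strict inequality for $a>1$), which shows that $(a-1)/a - \log a < 0$ for $a \geq 8$, so the second summand is also negative. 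Hence the derivative is strictly negative and $\alpha(a, \kappa a)$ is decreasing in $a$, which together with the constancy of $\beta(a,\kappa a)$ gives the lemma.

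I do not really expect an obstacle here: once the substitution $b = \kappa a$ has been carried out, everything reduces to a one-line calculus fact and the elementary bound $\log a \geq 1 - 1/a$. The only point requiring a little vigilance is checking that the bound $a \geq 8$ in the statement is indeed compatible with the argument, i.e.\ that $\log a > (a-1)/a$ holds on the relevant range (it holds on all of $a > 1$, so the hypothesis $a \geq 8$ is amply sufficient and is presumably inherited from earlier lemmas in the chain rather than from this one).
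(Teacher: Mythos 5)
Your proposal is correct and follows essentially the same route as the paper: substitute $b=\kappa a$, note that $\beta(a,\kappa a)$ is constant in $a$, and show that $\partial_a\,\alpha(a,\kappa a)<0$ (your numerator, after clearing denominators, is exactly the paper's $-\kappa a^2+a\log a\,(\kappa-1)+(\kappa+1)a-1$). Your endgame via $\log a\ge 1-1/a$ is in fact slightly sharper than the paper's case distinction, since it gives negativity for all $a>1$ rather than only $a\ge 6$.
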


\begin{proof}
We have
\[
\alpha(a, \kappa a) + \beta(a, \kappa a) \log C= 
-\log (1 - \kappa) + \log C \left(1 + \frac{\kappa \log \kappa}{1 - \kappa}
\right) - \frac{(\kappa a - 1) \log a}{a - 1}.
\]

Hence, 
\[
\frac{\partial}{\partial a} \left(\alpha(a, \kappa a) + \log C
\beta(a, \kappa a)\right) = \frac{-\kappa a^2 + a \log a (\kappa - 1)
  + (\kappa + 1) a - 1}{a(a-1)^2}.
\]

For the numerator to be negative, it suffices 
that~$a \geq 1 + \frac{1}{\kappa}$ (then the term
in~$a^2$ is larger than the term in~$a$) or 
that~$a \geq \exp\left(\frac{\kappa + 1}{1 - \kappa}\right)$ 
(then the term in~$a\log a$ is larger than the term in~$a$). 
Since 
\[
\max_{\kappa \in [0, 1]} \min \left(1 + \frac{1}{\kappa}, 
\exp\left(\frac{\kappa + 1}{1 - \kappa}\right) \right) \leq 6,
\]
the result follows. 
\qed
\end{proof}

In the results above, we did not need~$C=\exp(-6)$. The only property
we used about~$C$ was~$\log C<0$.  In the sequel, we define~$\tau(a,
\kappa) = \alpha(a, \kappa a) - 6 \beta(a, \kappa a)$.  We are to
prove that~$\tau(a, \kappa) \leq -5$ as soon as~$\kappa$ is not very
close to~$1$.

\begin{lemma}
\label{le:taudecroitkappa}
For any~$a \geq 755$, the function $\kappa \mapsto \tau(a, \kappa)$
increases to a local maximum in~$\left[0, \frac{1}{2}\right]$, then
decreases to a local minimum in~$\left[\frac{1}{2}, 1 - \frac{1}{2
    \log a}\right]$ and then increases.
\end{lemma}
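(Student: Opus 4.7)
The plan is to set $h(\kappa) := \partial_\kappa \tau(a,\kappa)$ and show that on $(0,1)$ the function $h$ is positive, then negative, then positive again, with its two zeros $\kappa_1 \le \kappa_2$ satisfying $\kappa_1 \in [0,1/2]$ and $\kappa_2 \in [1/2, 1 - 1/(2\log a)]$; this is exactly equivalent to the claim about $\tau$. Using the formulas $\alpha(a,\kappa a) = -\log(1-\kappa) - \frac{\kappa a - 1}{a-1}\log a$ and $\beta(a,\kappa a) = 1 + \frac{\kappa \log \kappa}{1-\kappa}$ derived just before the lemma, a direct differentiation yields
\[
h(\kappa) = \frac{1}{1-\kappa} - \frac{a\log a}{a-1} - \frac{6(\log\kappa + 1 - \kappa)}{(1-\kappa)^2}.
\]
From $\log\kappa \to -\infty$ as $\kappa \to 0^+$ one immediately gets $h(0^+) = +\infty$. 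The Taylor expansion $\log\kappa + 1 - \kappa = -(1-\kappa)^2/2 + O((1-\kappa)^3)$ shows that the last term of $h$ tends to $3$ as $\kappa \to 1^-$, so $h(1^-) = +\infty$ as well.

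The key structural step is to prove that $h$ has exactly one local minimum in $(0,1)$. Differentiating once more yields $h'(\kappa) = N(\kappa)/(1-\kappa)^3$, where
\[
N(\kappa) = 1 + 5\kappa - \frac{6}{\kappa} - 12\log\kappa
\qquad \mbox{and} \qquad
N'(\kappa) = \frac{5\kappa^2 - 12\kappa + 6}{\kappa^2}.
\]
The numerator of $N'$ has roots $(6 \pm \sqrt{6})/5$, and only $\kappa^\star = (6-\sqrt{6})/5 \approx 0.71$ lies in $(0,1)$; hence $N$ increases on $(0,\kappa^\star)$ and decreases on $(\kappa^\star,1)$. Since $N(0^+) = -\infty$ (the $-6/\kappa$ term dominates $-12\log\kappa$) while $N(1) = 0$, the function $N$ vanishes exactly once on $(0,1)$, and the same is true of $h'$. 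Thus $h$ decreases then increases on $(0,1)$, forming a single U-shape with limit $+\infty$ at both endpoints, and so has at most two zeros.

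To locate these zeros I evaluate $h$ at the two candidate interior points. Direct substitution gives
\[
h(1/2) = -10 + 24\log 2 - \frac{a\log a}{a-1},
\]
which is $\le 0$ iff $a\log a/(a-1) \ge 24\log 2 - 10 \approx 6.6355$, a condition a numerical check confirms for $a \ge 755$. For the right-hand endpoint, setting $\epsilon = 1/(2\log a)$ and reusing the Taylor expansion above yields
\[
h(1-\epsilon) = 2\log a - \frac{a\log a}{a-1} + 3 + O(1/\log a) = \log a + 3 + o(1),
\]
which is comfortably positive for $a \ge 755$ since $\log 755 > 6.6$. Combined with the U-shape of $h$, these two sign conditions force $\kappa_1 \le 1/2 \le \kappa_2 \le 1 - 1/(2\log a)$, exactly as required. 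The main delicacy is the numerical tightness of the threshold $a \ge 755$, which is dictated almost exactly by the inequality $h(1/2) \le 0$; the $h'$-analysis and the right-endpoint calculation both carry substantial slack and cause no difficulty.
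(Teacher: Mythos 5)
Your proof is correct and takes essentially the same route as the paper: establish a structural shape property of $h=\partial_\kappa\tau(a,\cdot)$ that limits it to at most two sign changes, then evaluate its sign at $0^+$, at $\frac{1}{2}$ (which is exactly where the threshold $a\ge 755$ comes from, via $-10+24\log 2-\frac{a\log a}{a-1}\le 0$), and at $1-\frac{1}{2\log a}$. The only difference is cosmetic: the paper obtains the shape by proving $\partial^3_\kappa\tau\ge 0$ (so $h$ is convex), whereas you prove $h$ is unimodal by showing its derivative's numerator $N$ changes sign exactly once; both computations check out and yield the same conclusion.
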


\begin{proof}
We first study 
\[
\frac{\partial^3}{\partial \kappa^3} \tau(a, \kappa) =
\frac{20\kappa^2+10\kappa^3+6-36\kappa-36 \kappa^2 \log
  \kappa}{(1-\kappa)^4 \kappa^2}.
\]

Using the fact that $\log \kappa \leq (\kappa - 1) - (\kappa - 1)^2/2
+(\kappa - 1)^3/3$ for $\kappa \in [0, 1]$, we find that the numerator
can be lower bounded by a polynomial which is non-negative for~$\kappa
\in [0, 1]$. As a consequence, $\tau'_{\kappa} (a, \kappa)=
\frac{\partial}{\partial \kappa} \tau(a, \kappa)$ is a convex function
with respect to~$\kappa\in (0,1)$. 

Notice now that~$\tau'_{\kappa} (a, \kappa) = -6\log \kappa + o(\log
\kappa) > 0$ for~$\kappa$ close to $0$, that~$\tau'_\kappa(a, 1/2) =
-10 + 24\log 2 - \frac{a\log a}{a-1} \leq 0$ for~$a \geq 755$, and
finally that
\begin{eqnarray*}
\tau'_\kappa\left(a, 1-\frac{1}{2\log a}\right) 
&=& 
-10 \log a -24 \log \left(1 - \frac{1}{2\log a}
\right) \log^2 a - \frac{a}{a-1} \log a\\
& \geq & 
2 \log a - \frac{a}{a-1} \log a,
\end{eqnarray*}
which is clearly positive for~$a \geq 3$. 
\qed
\end{proof}

The following lemma provides the result claimed in Theorem~\ref{th:main2}
for~$a \geq 158000$ and~$b \leq a - 1.65 \frac{a}{\log^3 a}$.

\begin{lemma}
\label{le:alphabeta}
Suppose that~$a \geq 158000$. Then, for 
all~$\kappa \leq 1 - 1.65\frac{1}{\log^3 a}$, we 
have~$\alpha(a, \kappa a) - 6 \beta(a, \kappa a) \leq -5$.
\end{lemma}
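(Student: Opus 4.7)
The plan is to reduce the supremum of $\tau(a,\cdot)$ over the interval $(0,1-1.65/\log^3 a]$ to two candidate values by invoking the shape information of Lemma~\ref{le:taudecroitkappa}, and then to control each candidate separately. First, note that for $a \geq 158000$ we have $1.65/\log^3 a < 1/(2\log a)$ (equivalent to $\log^2 a > 3.3$). Hence the right endpoint $\kappa^{\ast} := 1 - 1.65/\log^3 a$ lies in the region where, by Lemma~\ref{le:taudecroitkappa}, $\kappa \mapsto \tau(a,\kappa)$ is strictly increasing. Consequently, on $(0,\kappa^{\ast}]$ the function $\tau(a,\cdot)$ attains its supremum either at the interior local maximum in $[0,1/2]$, or at $\kappa^{\ast}$; it is enough to bound each of these two values by $-5$.

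For the interior candidate, Lemma~\ref{le:taudecroitalpha} yields $\tau(a,\kappa) \leq \tau(158000,\kappa)$ for every fixed $\kappa$ and every $a \geq 158000$, so the task reduces to verifying
\[
\max_{\kappa \in [0,1/2]} \tau(158000,\kappa) \leq -5.
\]
This is a finite numerical check: the closed form of $\tau$ gives an explicit uniform bound on $|\partial_\kappa \tau(158000,\cdot)|$ on $[0,1/2]$, and evaluating $\tau$ on a sufficiently fine grid shows that the maximum is about $-5.56$, comfortably below $-5$.

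For the right endpoint, set $\eta = 1 - \kappa^{\ast} = 1.65/\log^3 a$ and Taylor expand. A direct computation from the definition of $\alpha$ gives
\[
\alpha(a,\kappa^{\ast}a) \;=\; 3\log\log a - \log 1.65 - \log a + \frac{1.65}{\log^2 a}\Bigl(1 + \frac{1}{a-1}\Bigr),
\]
while $(1-\eta)\log(1-\eta)/\eta = -1 + \eta/2 + \eta^2/6 + O(\eta^3)$ yields $\beta(a,\kappa^{\ast}a) = \eta/2 + O(\eta^2) > 0$. Hence $\tau(a,\kappa^{\ast}) \leq h(a) + O(1/\log^3 a)$, with
\[
h(a) \;=\; 3\log\log a - \log a - \log 1.65 + \frac{1.65}{\log^2 a}.
\]
Differentiating gives $h'(a) = \tfrac{1}{a}\bigl(\tfrac{3}{\log a} - 1 - \tfrac{3.3}{\log^3 a} + \tfrac{14.85}{\log^4 a}\bigr)$, which is strictly negative as soon as $\log a \geq 3$; thus $h$ is decreasing on $[158000,+\infty)$, and it suffices to evaluate $h(158000)$ together with the explicit remainder terms and verify numerically that the result is $\leq -5$.

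The main obstacle is exactly this last step: the inequality is essentially tight, with the constants $1.65$ and $158000$ calibrated so that $h(158000) \approx -5.02$. The bulk of the technical work lies in bounding the lower-order corrections (in particular the $O(\eta)$ remainder from $\beta$ and the $O(1/((a-1)\log^2 a))$ correction to $\alpha$) sharply enough to certify that the sign of the bound is preserved. By contrast, the interior-maximum bound and the monotonicities invoked leave substantial slack, so the choice of threshold $158000$ is dictated entirely by the right-endpoint case.
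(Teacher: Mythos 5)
Your proof is correct and follows essentially the same route as the paper's: reduce via the shape of $\tau(a,\cdot)$ from Lemma~\ref{le:taudecroitkappa} to two candidates (the interior local maximum and the right endpoint $\kappa^\ast$), handle the former by monotonicity in $a$ (Lemma~\ref{le:taudecroitalpha}) plus a finite numerical verification at $a=158000$ (the paper locates the maximum near $\kappa\approx 0.0896$ by a sign change of $\partial_\kappa\tau$ together with a derivative bound, which is the same kind of check as your grid, and finds the same value $\approx -5.55$), and the latter by $\beta(a,\kappa^\ast a)>0$ together with the explicit expression $\alpha(a,\kappa^\ast a)=3\log\log a-\log 1.65-\log a+\frac{1.65}{\log^2 a}\bigl(1+\frac{1}{a-1}\bigr)$, shown to be decreasing and just below $-5$ at $a=158000$. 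The only blemish is the stray term $+14.85/\log^4 a$ in your formula for $h'$ (the derivative of the $h$ you wrote down has no such term), but it is harmless since the sign conclusion on $[158000,+\infty)$ is unaffected.
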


\begin{proof}
Let~$a_0= 158000$.  We have~$\tau'_\kappa(a_0, 0.08962) > 0 >
\tau'_\kappa(a_0, 0.08963)$.  Furthermore, for $\kappa \in [0.0937,
  0.0938]$, we have
\[
\left| \tau'_\kappa(a_0, \kappa) \right| \leq 
\max \left( |\tau'_\kappa(a_0, 0.08962)|, |\tau'_\kappa(a_0, 0.08963)|\right) 
\leq 3 \cdot 10^{-4}.
\]
Hence, 
\[
\max_{\kappa\in[0.08962, 0.08963]} \tau(a_0, \kappa) \leq 
\tau(a_0, 0.08962) + 3\cdot 10^{-9} < -5.
\]

Thanks to Lemmas~\ref{le:taudecroitalpha} and \ref{le:taudecroitkappa}, 
we have, for $a \geq 158000$: 
\[
\max_{\kappa\in [0, 1/2]}
\left( \alpha(a, \kappa a) - 6 \beta(a, \kappa a) \right) \leq -5.
\]

Furthermore, since~$\frac{1}{2\log a} \geq \frac{1.65}{\log(a)^3}$ and thanks
to Lemma~\ref{le:taudecroitkappa}, we have, for any~$a \geq 158000$:
\[
\max_{\kappa\in\left[\frac{1}{2}, 1-\frac{1.65}{\log^3 a} \right]} \tau(a, \kappa)
= \max\left( \tau\left(a, \frac{1}{2} \right), 
\tau \left(a,  1-\frac{1.65}{\log^3 a}\right) \right).
\]
Notice that 
\[
\tau\left(a,  1-\frac{1.65}{\log^3 a} \right)
\leq \alpha \left(a, a - \frac{1.65a}{\log^3 a} \right)
= 
-\log 1.65 + 3 \log \log a - \log a + \frac{a}{a-1} \frac{1.65}{(\log a)^2},
\]
which is decreasing with respect to~$a \geq 158000$. Moreover,
for~$a=158000$, its value is below~$-5$. As a consequence,
\[
\max_{\kappa\in \left[ \frac{1}{2}, 1-\frac{1.65}{\log^3 a}\right]} 
\tau(a, \kappa) \leq \max\left(\tau\left(a, \frac{1}{2}\right), -5\right) 
\leq -5.
\]
\qed\end{proof}

\subsection{Using~$T_1$ When~$b > a- \frac{1.65 a}{(\log a)^3}$}

This section ends the proof of Theorem~\ref{th:main2} for~$a \geq 158000$.

\begin{lemma}
\label{le:abveryclose}
Assume that~$\psi(x) = e^{-6} \cdot x$. Then, for~$a > b \geq a - 1.65
\frac{a}{(\log a)^3}$ and~$a \geq a_1 \geq 1782$, we have
\[
1 - \left(\frac{f_{\psi, d}(d-b+1)}{f_{\psi, d}(d-a+1)}\right)^2 \leq 
1 - \exp\left(-1.65\frac{\log a_1 - 5}{\log^3 a_1 - 1.65}\right).
\]
\end{lemma}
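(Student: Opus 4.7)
The plan is to exploit Lemma~\ref{le:explf} to obtain a closed-form lower bound for $(f_{\psi,d}(d-b+1)/f_{\psi,d}(d-a+1))^{2}$, and then to invoke monotonicity of an auxiliary function of $a$ to replace $a$ by $a_1$ in the bound.

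First, I would reindex Lemma~\ref{le:explf} by setting $m = d-\ell+1$, with $i = d-a+1$ and $k = j = d-b+1$. This yields
\[
\left(\frac{f_{\psi,d}(d-b+1)}{f_{\psi,d}(d-a+1)}\right)^{2} = \frac{\psi(b)}{\psi(a)} \prod_{m=b}^{a-1} \psi(m+1)^{-1/m}.
\]
Substituting $\psi(x) = e^{-6} x$ and taking logarithms,
\[
\log \left(\frac{f_{\psi,d}(d-b+1)}{f_{\psi,d}(d-a+1)}\right)^{2} = \log \frac{b}{a} + \sum_{m=b}^{a-1} \frac{6 - \log(m+1)}{m}.
\]

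Second, I would lower-bound each piece. The hypothesis $b \geq a(1 - 1.65/\log^{3} a)$ together with the elementary inequality $\log(1-x) \geq -x/(1-x)$ on $(0,1)$ gives $\log(b/a) \geq -1.65/(\log^{3} a - 1.65)$. For the sum, the condition $a \geq 1782$ forces $b \geq 1775 > e^{6}$, so $\log(m+1) > 6$ and each summand is negative on $[b,a-1]$. Using the crude bounds $\log(m+1) \leq \log a$ and $1/m \leq 1/b$ (both reversed in direction because $6-\log a < 0$), together with $(a-b)/b \leq 1.65/(\log^{3} a - 1.65)$, one gets
\[
\sum_{m=b}^{a-1} \frac{6 - \log(m+1)}{m} \geq (6 - \log a) \cdot \frac{a-b}{b} \geq -\frac{1.65 (\log a - 6)}{\log^{3} a - 1.65}.
\]
Adding the two estimates collapses everything into
\[
\log \left(\frac{f_{\psi,d}(d-b+1)}{f_{\psi,d}(d-a+1)}\right)^{2} \geq -1.65 \cdot g(a), \qquad g(a) := \frac{\log a - 5}{\log^{3} a - 1.65}.
\]

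Third, I would check that $g$ is non-increasing on $[1782,+\infty)$. Setting $y = \log a$, the sign of $g'$ agrees with that of $-2y^{3} + 15 y^{2} - 1.65$, which is negative as soon as $y \geq \log 1782 \approx 7.486$. Hence $g(a) \leq g(a_1)$ whenever $a \geq a_1 \geq 1782$; exponentiating the log-bound gives
\[
\left(\frac{f_{\psi,d}(d-b+1)}{f_{\psi,d}(d-a+1)}\right)^{2} \geq \exp\!\left(-\frac{1.65(\log a_1 - 5)}{\log^{3} a_1 - 1.65}\right),
\]
and subtracting from $1$ yields the claim.

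The main obstacle is this last borderline monotonicity check: the cubic $-2y^{3} + 15 y^{2} - 1.65$ has a root extremely close to $\log 1782$ (which is precisely why the threshold $1782$ appears in the statement), so one must do a short but careful numerical verification of its sign at $y = \log 1782$, or alternatively accept a slightly larger threshold. All other steps are routine manipulations of logarithms, sums, and first-order Taylor-type inequalities.
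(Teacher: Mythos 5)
Your proof is correct and follows essentially the same route as the paper's: both apply Lemma~\ref{le:explf} to get the explicit expression $\log(a/b)+\sum_{m=b}^{a-1}\frac{\log(m+1)-6}{m}$, bound the two pieces using $b\geq a\left(1-\frac{1.65}{\log^3 a}\right)$ and $\log(m+1)\leq\log a$, and conclude by monotonicity in $a$ of the resulting bound $1.65\,\frac{\log a-5}{\log^3 a-1.65}$. The only difference is that you explicitly verify that monotonicity (via the sign of $-2y^3+15y^2-1.65$ at $y=\log 1782$), a step the paper merely asserts.
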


\begin{proof}
According to Lemma~\ref{le:explf}, we have 
\begin{eqnarray*}
-2 \log \frac{f_{\psi, d}(d-b+1)}{f_{\psi, d}(d-a+1)} & = & 
\log \left( \frac{a}{b} \right) + 
\sum_{l=b}^{a-1} \frac{-6 + \log (l+1)}{l}\\
& \leq & 
\frac{1.65}{\log^3 a - 1.65} + 
(a - b) \frac{-6 + \log a}{b},\\
& \leq &  
1.65 \frac{\log a - 5}{(\log a)^3 - 1.65}.\\
\end{eqnarray*}
This upper bound decreases with respect to~$a \geq 1782$.
\qed\end{proof}

By using Lemma~\ref{le:alphabeta} and the fact that~$\beta(a,b) \leq
0$, we see that the left hand side of Equation~(\ref{eq:ab}) is upper
bounded, for~$b \geq a - 1.65 \frac{a}{(\log a)^3}$ and~$a \geq a_1
\geq 1782$, by:
\[
(a-b) \log \left( 1- \exp\left( -1.65 \frac{\log a_1 -5}{\log^3 a_1 -1.65}
\right) \right) 
\leq 
(a-b) \log \left(1.65 \frac{\log a_1 -5}{\log^3 a_1 -1.65}
\right),
\]
and the constant in the right hand side is below $-5$ when $a_1 = 158000$.

\subsection{Small Values of~$a$}

It only remains to prove Theorem~\ref{th:main2} for small values
of~$a$.  The following lemma was obtained numerically. In order to
provide a reliable proof, we used the Boost interval arithmetic
library~\cite{BrMePi06} and CRlibm~\cite{crlibmweb} as underlying
floating-point libraries.

\begin{lemma}
\label{le:numeric}
Let ~$\psi(x) = e^{-6} \cdot x$. For any~$2 \leq b < a \leq 158000$, we have
\[
(j-i+1)^{-\frac{j-i}{2}} \left( 1 - \left(\frac{f_{\psi, d}(j)}{f_{\psi,
    d}(i)}\right)^2\right)_+^{\frac{j-i}{2}} \cdot \prod_{k=i+1}^j \frac{f_{\psi,
    d}(i)}{f_{\psi, d}(k)} \leq \exp \left(-5 \frac{j-i}{2} \right),\]
with~$i=d-a+1$ and~$j=d-b+1$.
\end{lemma}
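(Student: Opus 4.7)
The proof is purely computational, so the plan is to set up the inequality in a form that can be checked reliably by interval arithmetic and then organize the computation so that it terminates in reasonable time. First, using Lemmas~\ref{le:explf} and~\ref{le:explicitpsi}, the quantity $f_{\psi,d}(i)/f_{\psi,d}(k)$ depends only on $d-i+1=a$ and $d-k+1$ (not on $d$ itself), so with the change of variable $a=d-i+1$, $b=d-j+1$ the inequality becomes a statement purely about $(a,b)$: we must show
\[
(a-b+1)^{-(a-b)/2} \cdot \left(1 - \frac{\psi(b)}{\psi(a)}
\prod_{l=b}^{a-1}\psi(l+1)^{-1/l}\right)_{+}^{(a-b)/2}
\cdot \left(\prod_{l=b}^{a-1} \frac{\psi(a)\psi(l+1)}{\psi(l)\psi(l+1)^{(b-1)/l}}\right)^{1/2}
\leq \exp\!\left(-\tfrac{5}{2}(a-b)\right)
\]
for every $2 \leq b < a \leq 158000$, with $\psi(x)=e^{-6}x$.

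To make this verification rigorous, I would evaluate both sides in directed-rounding interval arithmetic using the Boost interval library together with CRlibm for correctly-rounded evaluation of $\log$ and $\exp$, so that every intermediate quantity is represented by an enclosing interval and every roundoff is accounted for. Taking logarithms avoids underflow/overflow for large $a-b$ and turns the products into sums, which behave better numerically; the $1-(\cdot)^2$ factor (which can be close to $0$) is the only truly delicate one, so there I would handle cancellation by computing $1-x$ from an upper interval bound on $x$.

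To make the $\sim 1.25\cdot10^{10}$ pairs tractable, I would exploit the fact that going from $(a,b)$ to $(a,b-1)$ only multiplies the running quantities by one additional factor. Concretely, for each fixed $a$, I would initialize at $b=a-1$ and sweep downward, maintaining the logarithm of the product $\prod_{l=b}^{a-1}\psi(l+1)^{-1/l}$ and the logarithm of $\prod_{l=b}^{a-1}\psi(a)\psi(l+1)/(\psi(l)\psi(l+1)^{(b-1)/l})$ incrementally. This reduces the work per pair to $O(1)$ interval operations, for a total cost of $O(a_1^2)$ with $a_1=158000$, which is feasible on a workstation (at the scale of hours, not days).

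The main obstacle is neither mathematical nor algorithmic but rather the control of numerical error: the factor $(1-(f(j)/f(i))^2)_+$ can be extremely small for $b$ very close to $a$, so a naive implementation would produce interval widths that swallow the bound. To handle this, I would either rewrite $1-(f_{\psi,d}(j)/f_{\psi,d}(i))^2$ as $1-\exp(2\log(f_{\psi,d}(j)/f_{\psi,d}(i)))$ and use $1-e^{-t}\leq t$ to obtain a rigorous upper bound from the already-computed logarithm, or fall back, in the regime $a-b$ very small, on the explicit upper bound derived in Lemma~\ref{le:abveryclose} (which in fact already covers a neighborhood of the diagonal). After these precautions, the verification reduces to a bounded, reproducible computation whose output is a single Boolean, and the lemma follows.
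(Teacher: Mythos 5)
Your proposal takes essentially the same route as the paper: the paper's entire ``proof'' of this lemma is the statement that it was obtained numerically using the Boost interval arithmetic library with CRlibm as the underlying correctly-rounded floating-point layer, which is exactly the verification you describe. The additional implementation details you supply --- reducing the inequality to a statement in $(a,b)$ alone via Lemmas~\ref{le:explf} and~\ref{le:explicitpsi}, sweeping $b$ downward for fixed $a$ to keep the cost at $O(1)$ per pair, and guarding against cancellation in the factor $\left(1 - (f_{\psi,d}(j)/f_{\psi,d}(i))^2\right)_+$ --- go beyond what the paper records, but they are sound and are what any reproducible implementation of the check would need.
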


\subsection{Concluding Remarks}
The value of~$C = \exp(-6)$ is not optimal. Given the line of proof used
above (obtaining a geometric decreasing of the general term of the sum 
in Theorem~\ref{th:ajtai}), the best value of~$C$ that one can expect is 
limited by the term corresponding to~$j = d$, $i = d-1$, for which
we must have~$(2 \pi \e) \cdot (2C) \leq \frac{1}{(\sqrt{\e}+1)^2}.$

Note however that the probability $p$ of Lemma~\ref{le:core} involved
in our criterion can be computed more precisely for small dimensional 
lattices, thus improving the optimal value of~$C$ that can be reached. 
\end{document}